\def\OO{{\mathcal O}}
\def\F{\mathcal{F}}
\def\G{\mathcal{G}}
\def\H{\mathcal{H}}
\def\Pic0{\mathrm{Pic}^0}
\def\Alb{\mathrm{Alb}\,}
\def\(id){\,\mathrm{id}}
\def\reg{\mathrm{reg}}
\theoremstyle{plain}
\newtheorem{theorem}{Theorem}[section]
\newtheorem{theoremalpha}{Theorem}
\newtheorem{corollaryalpha}[theoremalpha]{Corollary}
\newtheorem{proposition/example}[theorem]{Proposition/Example}
\newtheorem{proposition}[theorem]{Proposition}
\newtheorem{lemma}[theorem]{Lemma}
\theoremstyle{definition}
\newtheorem{definition}[theorem]{Definition}
\newtheorem{remark}[theorem]{Remark}
\newtheorem{conjecture/question}[theorem]{Conjecture/Question}
\newtheorem{remark/definition}[theorem]{Remark/Definition}
\newtheorem{notation/assumptions}[theorem]{Assumptions/Notation}
\numberwithin{equation}{section}
\theoremstyle{remark}
\begin{document}

\title{derived invariants 
 arising from the Albanese map}

\author{Federico Caucci}
  \address{Sapienza Universit\`a di Roma, P.le Aldo Moro 5, I-00185 Roma, Italy}
 \email{{\tt caucci@mat.uniroma1.it}}
 \thanks{}

\author{Giuseppe Pareschi}
  \address{ Universit\`a di Roma Tor Vergata, V.le della Ricerca Scientifica, I-00133 Roma, Italy}
 \email{{\tt pareschi@mat.uniroma2.it}}
 %\thanks{The second author was partially supported by Tor Vergata funds E82F16000470005 and Italian MIUR-PRIN funds ``Geometry of Algebraic Varieties".}

\maketitle

%\tableofcontents
\setlength{\parskip}{.1 in}
\begin{abstract} Let $a_X:X\rightarrow \mathrm{Alb}\, X$ be the Albanese map of a  smooth  complex projective  variety.  
  Roughly speaking  in this note we prove that for all $i \geq 0$ and   $\alpha\in \mathrm{Pic}^0\, X$, the cohomology ranks 
  $h^i(\mathrm{Alb}\, X, \,{a_X}_* \omega_X\otimes P_\alpha)$ 
 are derived invariants. In the case of  varieties of maximal Albanese dimension this proves 
  conjectures of Popa and
 Lombardi-Popa -- including the derived invariance of the Hodge numbers $h^{0,j}$ --   and  a weaker version of them for arbitrary varieties. Finally we provide  an application to derived invariance of certain irregular fibrations.
\end{abstract}

\section{introduction} 

This paper is about derived invariants of  smooth complex  projective varieties (henceforth called varieties) arising from the Albanese morphism 
\[a_X :X\rightarrow \Alb X.\] 
For example, a fundamental result of Popa and Schnell shows that the dimension of the Albanese variety $ \Alb X$, i.e.  $h^0(\Omega^1_X)$, is a derived invariant (\cite{ps1}).  

Roughly speaking in this note we prove 
  that,  for all $i\ge 0$ and for all $\alpha\in\Pic0 X$ the cohomology ranks 
  \[h^i(\Alb X, \,{a_X}_* \omega_X\otimes P_\alpha)\] 
 are derived invariants.  In the case of varieties of maximal Albanese dimension\footnote{this means that $\dim a_X(X)=\dim X$. To be of maximal Albanese dimension is a derived invariant property \cite{lombardi-derived}} this settles in the affirmative (a strengthened version of) a conjecture of 
 Lombardi and Popa  (\cite{popa-conj2} Conjecture 11) -- proved by Lombardi for $i=0$ and partially for $i=1$ (\cite{lombardi-derived}) -- and proves a weaker version of it for arbitrary varieties. For varieties of maximal Albanese dimension this implies   the derived invariance of the Hodge numbers $h^{0,j}$ for all $j\ge 0$  and of all canonical cohomological support loci, proving in this case another conjecture of Popa. 
   In this direction previous results in low dimension were obtained by Popa, Lombardi and Abuaf (\cite{popa-conj1}, \cite{lombardi-derived}, \cite{popa-conj2}, \cite{abuaf}). 
 
  Turning to  precise statements, for a variety $X$ let us denote $\mathbf D(X)$ its bounded derived category of coherent sheaves. Let $Y$ be another 
  variety and let
\[\varphi:\mathbf D(X)\rightarrow \mathbf D(Y)\]
 be an exact equivalence. 
  As shown by R. Rouquier (\cite{rouquier}, see also \cite{ps1}), $\varphi$ induces an isomorphism of algebraic groups
\begin{equation}\label{rou}\overline\varphi: \mathrm{Aut}^0\, X \times \Pic0 X \rightarrow \mathrm{Aut}^0\, Y \times \Pic0 Y. 
\end{equation}
  We choose normalized Poincar\'e line bundles 
   so that to a closed point $\alpha\in \Pic0 X$ (resp. $\beta \in \Pic0 Y$) corresponds the line bundle $P_\alpha$ on $X$ (resp. $P_\beta$ on $Y$). Essential for our arguments is a result of Lombardi, from which it follows  that if $h^i(\Alb X,{a_X}_*\omega_X\otimes P_\alpha)>0$ for some  $i\ge 0$ then $\overline\varphi(\(id)_X,P_\alpha)$ is of the form $(\(id)_Y,P_\beta)$ for a $\beta\in\Pic0 Y$.  If this is the case we will abusively denote
\[\beta=\overline\varphi(\alpha)\]
\begin{theoremalpha}\label{main} Let $i\in\mathbb N$. In the above notation, $h^i(\Alb X, \, {a_X}_*\omega_X\otimes P_\alpha)>0$ if and only if $h^i(\Alb Y, \, {a_Y}_*\omega_Y\otimes P_{\bar\varphi(\alpha)})>0$. If this is the case
\[h^i(\Alb X, \, {a_X}_*\omega_X\otimes P_\alpha)=h^i(\Alb Y, \,  {a_Y}_*\omega_Y\otimes P_{\bar\varphi(\alpha)}).\]
\end{theoremalpha}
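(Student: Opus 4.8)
The plan is to pack all the numbers $h^i(\Alb X,{a_X}_*\omega_X\otimes P_\alpha)$ into one coherent sheaf on $\Pic0 X$ produced by generic vanishing, to recognize that sheaf as built out of $\mathbf D(X)$ by a Fourier--Mukai transform, and then to transport it along $\varphi$ using~\eqref{rou} — the hypothesis in Lombardi's theorem being exactly what kills the $\mathrm{Aut}^0$-ambiguity of $\overline\varphi$ on the locus that matters. Write $A=\Alb X$, $B=\Alb Y$, $g=\dim A=\dim B$ (equality by \cite{ps1}), $n=\dim X=\dim Y$ (the dimension of a variety is a derived invariant), and $\F_X={a_X}_*\omega_X$, $\F_Y={a_Y}_*\omega_Y$. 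By Hacon's generic vanishing theorem $\F_X$ is a $GV$-sheaf on $A$; applying the Fourier--Mukai transform $\Phi_A\colon\mathbf D(A)\to\mathbf D(\Pic0 A)$ and Grothendieck--Serre duality one forms the associated coherent sheaf $\widehat{\F}_X:=R^g\Phi_A\big(R\mathcal{H}om_A(\F_X,\OO_A)\big)$ on $\Pic0 X=\Pic0 A$, for which the standard $GV$ cohomology-and-base-change package (Pareschi--Popa) gives, for all $i$ and all $\alpha$,
\[
h^i\big(A,\F_X\otimes P_\alpha\big)\;=\;\dim_{\CC}\Ext^i_{\OO_{\Pic0 X}}\big(\widehat{\F}_X,\,k(\alpha)\big)
\]
(up to a fixed sign normalization on $\alpha$, which I suppress). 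Likewise one has $\widehat{\F}_Y$ on $\Pic0 Y$. By the result of Lombardi recalled in the introduction, $h^i(A,\F_X\otimes P_\alpha)>0$ forces $\overline\varphi(\mathrm{id}_X,P_\alpha)=(\mathrm{id}_Y,P_\beta)$: thus $\beta=\overline\varphi(\alpha)$ is defined precisely on the locus where Theorem~\ref{main} has content, and the same remark for $\varphi^{-1}$ yields the ``only if''. So it suffices to produce, for each $\alpha$ with $\overline\varphi(\mathrm{id}_X,P_\alpha)=(\mathrm{id}_Y,P_\beta)$, isomorphisms of graded vector spaces $\Ext^\bullet_{\OO_{\Pic0 X}}(\widehat{\F}_X,k(\alpha))\cong\Ext^\bullet_{\OO_{\Pic0 Y}}(\widehat{\F}_Y,k(\beta))$.

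Next I would show that $\widehat{\F}_X$ is built from $\mathbf D(X)$. Since $\Pic0 X=\Pic0 A$ and the Poincar\'e bundle $\cP_X$ on $X\times\Pic0 X$ is the pullback of the Poincar\'e bundle of $A$ along $a_X\times\mathrm{id}$, the composite $\Phi_A\circ Ra_{X*}$ equals the integral functor $\Phi_X:=Rp_{\Pic0 X\,*}\big(p_X^*(-)\otimes\cP_X\big)\colon\mathbf D(X)\to\mathbf D(\Pic0 X)$. Using Koll\'ar's splitting $Ra_{X*}\omega_X\cong\bigoplus_j R^ja_{X*}\omega_X[-j]$, the fact that every summand is again $GV$, and Grothendieck--Serre duality for $a_X$ (here $\omega_A\cong\OO_A$, so $a_X^{!}\OO_A\cong\omega_X[n-g]$ and $R\mathcal{H}om_A(Ra_{X*}\omega_X,\OO_A)\cong Ra_{X*}\OO_X[n-g]$), a short computation identifies
\[
\widehat{\F}_X\;\cong\;R^{n}\Phi_X(\OO_X)\;=\;R^{n}p_{\Pic0 X\,*}\,\cP_X,
\]
the lower cohomology sheaves of $\Phi_X(\OO_X)$ recording the remaining Koll\'ar pieces $R^ja_{X*}\omega_X$ ($j\ge 1$), which are irrelevant here; dually, by relative Serre duality $\widehat{\F}_X$ is $p_{\Pic0 X\,*}(p_X^*\omega_X\otimes\cP_X)$ — the sheaf whose fibre at $\alpha$ is $H^0(X,\omega_X\otimes P_\alpha)$ — up to the involution $(-1)^*$ and dualizing. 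For $i=0$ this reduces the claim to the single nonzero summand $H^n(X,P_\alpha)$ of the $\overline\varphi$-twisted Hochschild group $\bigoplus_{q-p=-n}H^p(X,\Omega^q_X\otimes P_\alpha)$, a derived invariant by the Popa--Schnell/Lombardi machinery once the automorphism part of $\overline\varphi(\mathrm{id}_X,P_\alpha)$ is trivial; the real content is the case $i\ge 1$, where one needs the whole sheaf $\widehat{\F}_X$, not merely its fibre dimensions.

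The final step — and the main obstacle — is the transport along $\varphi$. Let $\cE\in\mathbf D(X\times Y)$ be a Fourier--Mukai kernel for $\varphi$. By the construction of~\eqref{rou}, for every $\gamma\in\Pic0 X$ the autoequivalence $(-)\otimes P_\gamma$ of $\mathbf D(X)$ is intertwined by $\varphi$ with the autoequivalence of $\mathbf D(Y)$ given by $\overline\varphi(\mathrm{id}_X,P_\gamma)$. Restrict to the closed subgroup $U\subseteq\Pic0 X$ on which $\overline\varphi(\mathrm{id}_X,P_\gamma)$ has trivial $\mathrm{Aut}^0$-component; by Lombardi's theorem $U$ contains $\mathrm{Supp}\,\widehat{\F}_X$ and every locus $\{\gamma:\Ext^i(\widehat{\F}_X,k(\gamma))\neq 0\}$. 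Over $U$, $\varphi$ genuinely intertwines the two Poincar\'e families, so $\Phi_Y\circ\varphi$ and $\Phi_X$ become compatible — up to a line-bundle twist and a shift — with the isomorphism $U\xrightarrow{\ \sim\ }\overline\varphi(U)\subseteq\Pic0 Y$; passing to the top cohomology sheaf $R^{n}(-)$ (which commutes with base change) should then transport $\widehat{\F}_X|_U$ to a sheaf on $\overline\varphi(U)$ that a Grothendieck--Serre duality argument on $Y$ identifies with $\widehat{\F}_Y|_{\overline\varphi(U)}$, yielding the desired $\Ext$-isomorphisms and hence Theorem~\ref{main}. The subtlety is that the clean identity ``$\Phi_Y\circ\varphi\cong(\text{transport})\circ\Phi_X$'' is \emph{false} in general — $\overline\varphi$ mixes $\mathrm{Aut}^0$ and $\Pic0$, and $\varphi(\OO_X)\ne\OO_Y$ — so one must simultaneously control the intertwining of the Poincar\'e families and the passage from $\varphi(\OO_X)$ to $\OO_Y$; the heart of the argument is that after restricting over the canonical cohomological support locus (where Lombardi's theorem removes the automorphism ambiguity) and after passing to the single cohomology sheaf $R^{n}\Phi_X(-)$ (the one that base-changes well and, over that locus, does not see the structure-sheaf discrepancy), the transport does hold. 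Making this precise, together with the sign/duality bookkeeping relating $\widehat{\F}_X$, $R^{n}\Phi_X(\OO_X)$ and $p_{\Pic0 X\,*}(p_X^*\omega_X\otimes\cP_X)$, is the technical core; everything else — generic vanishing for ${a_X}_*\omega_X$, Koll\'ar's splitting, Grothendieck duality for $a_X$, Rouquier's Theorem~\eqref{rou} and Lombardi's theorem — enters as a black box.
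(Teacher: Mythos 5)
Your reduction of the statement to an isomorphism of the $\Ext$-spaces of the Fourier--Mukai transform $\widehat{\F}_X$ is a legitimate reformulation (this is the Pareschi--Popa GV-duality package, and the identification of $\widehat{\F}_X$ with a cohomology sheaf of $\Phi_X(\OO_X)$ via Grothendieck duality and Koll\'ar splitting is essentially right). But the proof has a genuine gap exactly where you locate "the technical core": the transport of $\widehat{\F}_X$ along $\varphi$. The Rouquier construction only gives an isomorphism of functors $\varphi\circ(-\otimes P_\gamma)\cong(\overline\varphi(\mathrm{id}_X,P_\gamma))\circ\varphi$; it does not intertwine the integral functors $\Phi_X$ and $\Phi_Y$ to $\Pic0 X$ and $\Pic0 Y$, even after restricting to the locus where the $\mathrm{Aut}^0$-component is trivial, because the kernel of $\varphi$ has no a priori compatibility with the two Poincar\'e families and $\varphi(\OO_X)\neq\OO_Y$. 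You acknowledge that the clean identity is false and assert that "after passing to $R^n$ over the support locus the transport does hold" --- but no mechanism is offered for why this single cohomology sheaf should survive the discrepancy, and this is precisely the entire difficulty of the theorem, not a bookkeeping issue. As written, the argument proves the (known) case $i=0$ and leaves $i\ge 1$ to an unproved claim.

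For comparison, the paper does not attempt any such transport of sheaves on $\Pic0$. It propagates the $i=0$ case (Lombardi's isomorphism $H^0(X,\omega_X\otimes P_\alpha)\cong H^0(Y,\omega_Y\otimes P_{\overline\varphi(\alpha)})$) to all $i$ by a purely module-theoretic mechanism: the twisted Hochschild homology $HH_*(X,\alpha)$ is a graded module over $\Lambda^*H^1(Z_X,\OO_{Z_X})$ (the Iitaka base enters because only this subspace of $H^1(\OO_X)$ is preserved by the Rouquier isomorphism, via the structure of $V^0(X,\omega_X^m)$), and by the Lazarsfeld--Popa--Schnell $0$-regularity theorem the submodules $H^*(\Alb Z_X,\H_{X,j}\otimes P_\eta)$ coming from the Chen--Jiang decomposition are generated in degree $0$; hence the degree-$0$ identification forces an identification in every degree, and a Koll\'ar decomposition for $\Alb X\to\Alb Z_X$ converts this back into the cohomology ranks of ${a_X}_*\omega_X$. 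Some substitute for this generation-in-degree-zero input (or an actual construction of the intertwining you postulate) is indispensable; without it the proposal is a plan rather than a proof.
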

 
 It is  expected that derived-equivalent varieties have the same Hodge numbers. As a consequence of the invariance
 of the cohomological ranks of the sheaves ${a_X}_*\omega_X$ it follows that this holds true for the $h^{0,j}$'s of varieties of maximal Albanese dimension.
   \begin{corollaryalpha}\label{cor1} Let $X$ and $Y$ be smooth complex projective varieties with equivalent derived categories. 
  Then, for all $i\in\mathbb N$
  \[h^i(\Alb X,{a_X}_*\omega_X)= h^i(\Alb Y,{a_Y}_*\omega_Y).\]
  In particular, if $X$ is of maximal Albanese dimension then, for all $j\ge 0$,
  \[h^{0,j}(X)=h^{0,j}(Y).\]
  \end{corollaryalpha}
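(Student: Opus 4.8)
The plan is to deduce Corollary \ref{cor1} from Theorem \ref{main} applied to the trivial class $\alpha = 0 \in \Pic0 X$, for which $P_0 = \OO_X$. The first thing to observe is that the hypothesis needed to make sense of $\overline\varphi(\alpha)$ is automatic here: since $\overline\varphi$ in \eqref{rou} is an isomorphism of algebraic groups, it sends the identity element $(\mathrm{id}_X, \OO_X)$ to the identity element $(\mathrm{id}_Y, \OO_Y)$, so $\overline\varphi(0)$ is defined and equals $0$, i.e. $P_{\overline\varphi(0)} = \OO_Y$. Then Theorem \ref{main} gives, for each $i$, that $h^i(\Alb X, {a_X}_*\omega_X) > 0$ if and only if $h^i(\Alb Y, {a_Y}_*\omega_Y) > 0$, with equality of ranks when these are positive; since the two ranks also coincide (both being $0$) when neither is positive, we obtain $h^i(\Alb X, {a_X}_*\omega_X) = h^i(\Alb Y, {a_Y}_*\omega_Y)$ for all $i \in \mathbb N$. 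This is the first assertion, and it requires no hypothesis on the Albanese dimension.

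For the second assertion, suppose $X$ is of maximal Albanese dimension. Then $Y$ is as well, because this is a derived invariant property (see the footnote in the introduction, \cite{lombardi-derived}), and moreover $\dim X = \dim Y =: n$ since the dimension is a derived invariant. Now $a_X \colon X \to \Alb X$ is generically finite onto its image, so by Koll\'ar's vanishing theorem for the higher direct images of the dualizing sheaf one has $R^k {a_X}_*\omega_X = 0$ for all $k > 0$; hence the Leray spectral sequence degenerates and yields $h^i(\Alb X, {a_X}_*\omega_X) \cong h^i(X, \omega_X)$ for every $i$. By Serre duality on $X$, $h^i(X, \omega_X) = h^{n-i}(X, \OO_X) = h^{0,n-i}(X)$, and the same computation applies to $Y$. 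Combining this with the first assertion and letting $i$ range over $0, \dots, n$ (both sides vanish for $i$ outside this range, and correspondingly $h^{0,j} = 0$ for $j \notin \{0,\dots,n\}$) gives $h^{0,j}(X) = h^{0,j}(Y)$ for all $j \ge 0$.

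The argument is essentially formal once Theorem \ref{main} is available; the only steps needing a little care are the identification $\overline\varphi(0) = 0$, which rests purely on the fact that the Rouquier isomorphism is a homomorphism of groups and not on any geometry, and the passage from cohomology on $\Alb X$ to cohomology on $X$ in the maximal Albanese dimension case, which uses that $\dim a_X(X) = \dim X$ makes $a_X$ generically finite onto its image so that Koll\'ar vanishing applies. I do not expect any genuine obstacle in the proof of the corollary itself: all of the real content sits in Theorem \ref{main}.
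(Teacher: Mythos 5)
Your proof is correct and follows essentially the same route as the paper: apply Theorem \ref{main} with $\alpha=0$ (noting $\overline\varphi$ preserves the identity), then use vanishing of $R^k{a_X}_*\omega_X$ for $k>0$ in the maximal Albanese dimension case — the paper cites Grauert--Riemenschneider where you cite Koll\'ar, but the step is the same — together with the derived invariance of maximal Albanese dimension. The only content you add beyond the paper's terse argument is spelling out Serre duality to pass from $h^i(X,\omega_X)$ to $h^{0,n-i}(X)$, which the paper leaves implicit.
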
 
  
    Notice that in the maximal Albanese dimension case  $R^i{a_X}_*\omega_X = 0$   for  $i>0$  (Grauert-Riemenschneider vanishing) and therefore $h^i(X, \omega_X)= h^i(\Alb X, {a_X}_*\omega_X)$. This proves the last part of the Corollary. 
    
    Given a coherent sheaf $\F$ on a smooth projective variety $X$ its \emph{cohomological support loci} are the following algebraic subvarieties of $\Pic0 X$: 
  \[V^i_{ r}(X,\F)=\{\alpha\in \Pic0 X\>|\> h^i(X, \F\otimes P_\alpha)\ge r\}.\]
  For $r=1$ we simply denote $V^i_{ r}(X,\F)=V^i(X,\F)$. 
  Again, by Grauert-Riemenschneider and projection formula it follows that $V^i_r(X,\omega_X)=V_r^i(\Alb X, {a_X}_*\omega_X)$ in the maximal Albanese dimension  case. 
 
  It has been conjectured by Popa (\cite{popa-conj1}) that all loci $V^i(X,\omega_X)$ are derived invariants of smooth complex projective varieties. This conjecture has been verified by Lombardi and Popa -- only for the components containing the origin of $\Pic0 X$ -- unconditionally on the Albanese dimension 
      for $i=0, 1,\dim X-1,\dim X$ (\cite{lombardi-derived},\cite{popa-conj2}) and
    in dimension  3  (\cite{lombardi-derived}), and for varieties of maximal Albanese dimension in dimension  4 (\cite{popa-conj2}). The following corollary fully proves   Popa's conjecture for varieties of maximal Albanese dimension, and, in general, the analogous statement for the loci $V^i_r(\Alb X, {a_X}_*\omega_X)$. 
  
  \begin{corollaryalpha} \label{cor2} Let $X$ and $Y$ be  varieties with equivalent derived categories. 
  For all $i,r\in\mathbb N$, the Rouquier isomorphism induces an isomorphism between 
 $V^i_r(\Alb X,{a_X}_*\omega_X)$ and  $V^i_r(\Alb Y,{a_Y}_*\omega_Y)$.

\noindent   In particular, if $X$ is of maximal Albanese dimension then all cohomological support loci
  $V^i_r(X,\omega_X)$ and  $V^i_r(Y,\omega_Y)$ are isomorphic.
   \end{corollaryalpha}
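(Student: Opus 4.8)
The plan is to derive Corollary~\ref{cor2} from Theorem~\ref{main} by a formal argument with the Rouquier isomorphism; no new geometric input is needed. Fix $i,r\in\mathbb N$. By upper semicontinuity of $\alpha\mapsto h^i(\Alb X,{a_X}_*\omega_X\otimes P_\alpha)$ the locus $V^i_r(\Alb X,{a_X}_*\omega_X)$ is a closed subvariety of $\Pic0 X$; write $\widetilde V^i_r(X)\subseteq \mathrm{Aut}^0\, X\times\Pic0 X$ for its image under $\alpha\mapsto(\mathrm{id}_X,P_\alpha)$, and define $\widetilde V^i_r(Y)$ analogously. If $\alpha\in V^i_r(\Alb X,{a_X}_*\omega_X)$ then in particular $h^i(\Alb X,{a_X}_*\omega_X\otimes P_\alpha)>0$, so by the result of Lombardi recalled just before Theorem~\ref{main} one has $\overline\varphi(\mathrm{id}_X,P_\alpha)=(\mathrm{id}_Y,P_{\bar\varphi(\alpha)})$ for a well-defined $\bar\varphi(\alpha)\in\Pic0 Y$, and Theorem~\ref{main} gives
\[
h^i(\Alb Y,{a_Y}_*\omega_Y\otimes P_{\bar\varphi(\alpha)})=h^i(\Alb X,{a_X}_*\omega_X\otimes P_\alpha)\ge r .
\]
Hence $\overline\varphi$ carries $\widetilde V^i_r(X)$ into $\widetilde V^i_r(Y)$.

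Next I would apply the same reasoning to a quasi-inverse equivalence $\varphi^{-1}:\mathbf D(Y)\to\mathbf D(X)$. By the naturality of Rouquier's construction its associated isomorphism is $\overline\varphi^{\,-1}$, so the previous paragraph with the roles of $X$ and $Y$ exchanged shows that $\overline\varphi^{\,-1}$ carries $\widetilde V^i_r(Y)$ into $\widetilde V^i_r(X)$. Therefore $\overline\varphi(\widetilde V^i_r(X))=\widetilde V^i_r(Y)$ as closed subsets of $\mathrm{Aut}^0\, Y\times\Pic0 Y$. Since $\overline\varphi$ is an isomorphism of algebraic groups, in particular of varieties, its restriction is an isomorphism $\widetilde V^i_r(X)\cong\widetilde V^i_r(Y)$; composing with the isomorphisms onto $V^i_r(\Alb X,{a_X}_*\omega_X)$ and $V^i_r(\Alb Y,{a_Y}_*\omega_Y)$ induced by the second projections yields the desired isomorphism, which by construction is the one induced by the Rouquier isomorphism \eqref{rou}.

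For the last assertion, suppose $X$ is of maximal Albanese dimension; then so is $Y$, this being a derived invariant property (see the footnote). By Grauert--Riemenschneider vanishing $R^j{a_X}_*\omega_X=0$ for $j>0$, so the Leray spectral sequence of $a_X$ together with the projection formula gives $h^i(X,\omega_X\otimes a_X^*P_\gamma)=h^i(\Alb X,{a_X}_*\omega_X\otimes P_\gamma)$ for every $i$ and every $\gamma\in\Pic0\Alb X$. As $a_X^*:\Pic0\Alb X\to\Pic0 X$ is an isomorphism, this identifies $V^i_r(X,\omega_X)$ with $V^i_r(\Alb X,{a_X}_*\omega_X)$, and the same holds for $Y$; the first part of the Corollary then gives $V^i_r(X,\omega_X)\cong V^i_r(Y,\omega_Y)$.

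I expect the only delicate point to be the compatibility used in the first paragraph, namely that $\overline\varphi$ actually sends the relevant portion of $\{\mathrm{id}_X\}\times\Pic0 X$ back into $\{\mathrm{id}_Y\}\times\Pic0 Y$ rather than introducing a nontrivial automorphism factor. This is precisely the input from Lombardi's work already underlying Theorem~\ref{main}; granting it, the rest is bookkeeping with the functoriality of the Rouquier isomorphism and with semicontinuity, so the corollary is essentially a repackaging of Theorem~\ref{main}.
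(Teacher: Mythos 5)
Your argument is correct and is essentially the route the paper takes: Corollary~\ref{cor2} is an immediate consequence of Theorem~\ref{main} together with Lombardi's result (guaranteeing $\overline\varphi$ maps the relevant part of $\{\mathrm{id}_X\}\times\Pic0 X$ into $\{\mathrm{id}_Y\}\times\Pic0 Y$), the symmetric argument for a quasi-inverse, and, for the final assertion, Grauert--Riemenschneider plus the projection formula identifying $V^i_r(X,\omega_X)$ with $V^i_r(\Alb X,{a_X}_*\omega_X)$ in the maximal Albanese dimension case. The bookkeeping you supply (semicontinuity, closedness, restriction of the algebraic isomorphism) is exactly what the paper leaves implicit.
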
  
   
   The method of proof of Theorem \ref{main} makes use of many essential results concerning the geometry of irregular varieties based on generic vanishing theory:  generic vanishing theorems, the relation between the loci $V^0(X,\omega_X^m)$ and the Iitaka fibration, the Chen-Jiang decomposition,   linearity theorems and their relation -- via the Bernstein-Gel'fand-Gel'fand correspondence -- with the Castelnuovo-Mumford regularity of suitable cohomology modules. This material is briefly reviewed in \S2 and \S3. The starting point of the argument are the  results and constructions of Lombardi (\cite{lombardi-derived}), who proves, in particular, the case $i=0$  of Theorem \ref{main}. Roughly, our method derives Theorem \ref{main} from the case $i=0$ by means of the derived invariance of the Hochschild multiplicative structure,  combined with the result of Lazarsfeld, Popa and Schnell on  the cohomology modules $H^*(\Alb X,{a_X}_*\omega_X\otimes P_\alpha)$ over the exterior algebra $\Lambda^*H^1(\Alb X, \OO_{\Alb X})$.
   
   Next, we turn to some applications of Theorem \ref{main} and especially of Corollary \ref{cor2}. It is known by the seminal work of Green and Lazarsfeld \cite{gl2} that the positive-dimensional components of the loci $V_r^i(X, \omega_X)$ are related to the presence of \emph{irregular fibrations} i.e. morphisms with connected fibres onto lower-dimensional  normal projective varieties -- here called \emph{base} of the fibration -- whose smooth models have maximal Albanese dimension. Therefore, as sought by Popa (\cite{popa-conj1}) and in the spirit of previous work of Lombardi and Popa (\cite{lombardi-derived}, \cite{popa-conj2}, and especially \cite{lombardi-fibrations}), the part of Corollary \ref{cor2} concerning varieties of maximal Albanese dimension implies the derived invariance of the presence/absence of certain irregular fibrations and, more, the invariance of the set itself of such fibrations. This imposes striking restrictions to the geometry and topology of the Fourier-Mukai partners. An example of this is the following Theorem \ref{fibrations},  concerning irregular fibrations of minimal base-dimension on varieties of maximal Albanese dimension. We remark that it is likely that a more thorough analysis of the informations provided by Theorem \ref{main} can get to more complete results.

   Turning to details, let us recall some notions appearing in the statement of Theorem \ref{fibrations}. In the first place we recall that  $\chi(\omega_{Z})\ge 0$ for a variety $Z$ of maximal Albanese dimension (see \S2). An irregular fibration
    \[ g:X\rightarrow S\]
     is said to be $\chi$-positive if $\chi(\omega_{S^\prime})>0$ for a smooth model $S^\prime$ of $S$ (hence for all of them). 
     This implies, in particular, that $S^\prime$ is of general type. $\chi$-positive fibrations might be seen as the higher-dimensional analogue of fibrations onto curves of genus $\ge 2$, which were classically studied by Castelnuovo and de Franchis (\cite{castel},\cite{defranchis}). Unconditionally on the Albanese dimension, Lombardi proved the invariance of the equivalence classes of the set of fibrations over curves of genus $\ge 2$ (\cite{lombardi-fibrations}).  For varieties of maximal Albanese dimension we note that, as a consequence of Orlov's theorem on the derived invariance of the canonical ring, the equivalence classes of all $\chi$-positive irregular fibrations are derived invariant (Prop \ref{chi-positive} below).
     
      On the other hand, even in the case of varieties of maximal Albanese dimension,  it is unclear what happens for non $\chi$-positive fibrations, especially when the base is birational to an abelian variety.
     Theorem \ref{fibrations} below gives a positive result about the derived invariance of the equivalence classes of a certain type irregular fibrations which are not necessarily $\chi$-positive, and include certain fibrations onto abelian varieties.   In order to state the result, let us denote  $\Pic0 (g)$ the kernel of the restriction of $\Pic0 X$ to a general fibre of $g$. It is well known that $\Pic0 (g)$ is an extension of $g^*\Pic0 S$ by a finite subgroup of $\Pic0 X / g^*\Pic0 S$, hence it may be disconnected.\footnote{For fibrations $g$ onto curves  the subvariety $\Pic0 (g)$ is completely described in the work of Beauville  \cite{beauville}.}
     We call  \emph{cohomologically detectable}  all irregular fibrations $g:X\rightarrow S$ 
    except those such that $S$ is birational to an abelian variety $B$ and $\Pic0 (g)=g^*\Pic0 B$, and we denote by $b(X)$ the minimal base-dimension (namely $\dim S$) of such fibrations (if there are no cohomologically detectable fibrations we declare that $b(X)=0$).  The explanation for such terminology is in Remark \ref{bad} below. Suffice here to say that, by the Green-Lazarsfeld linearity theorem, "most" irregular fibrations are induced by positive-dimensional components of the various loci $V^i(X,\omega_X)$. For irregular fibrations $g$ whose base  is birational to an abelian variety, and $\Pic0 (g)$ is connected, either this doesn't happen or it happens in a non-standard way.

      \begin{theoremalpha}\label{fibrations} Let $X$ and $Y$ be $d$-dimensional derived equivalent varieties of maximal Albanese dimension. Then:
      \[b(X)=b(Y):=b. \]
    Moreover there is a base-preserving  bijection
          of the sets  of the equivalence classes  of cohomologically detectable  irregular fibrations  of $X$ and $Y$ of base-dimension equal to $b$.
           Such bijection takes $\chi$-positive fibrations to  $\chi$-positive fibrations.
           \end{theoremalpha}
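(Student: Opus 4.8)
The plan is to reconstruct the cohomologically detectable irregular fibrations of $X$ — together with their base-dimensions and with the property of being $\chi$-positive — from the whole collection of cohomological support loci $\{V^i_r(X,\omega_X)\}_{i,r}$ decorated with the rank functions $\alpha\mapsto h^i(X,\omega_X\otimes P_\alpha)$ (equivalently, since $X$ has maximal Albanese dimension, from $\{V^i_r(\Alb X,{a_X}_*\omega_X)\}_{i,r}$), and then to transport this data along the isomorphisms furnished by Corollary \ref{cor2}.

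First I would make precise the Green--Lazarsfeld correspondence in the refined form of \cite{gl2} and \cite{lombardi-fibrations} (see also \S\S2--3). By the linearity theorems, every positive-dimensional irreducible component $W$ of any $V^i_r(X,\omega_X)$ is a torsion translate of a subtorus $\widehat B_W\subseteq\Pic0 X$; dualizing the inclusion $\widehat B_W\hookrightarrow\Pic0 X\cong\widehat{\Alb X}$ produces a quotient abelian variety $\Alb X\twoheadrightarrow B_W$, and the Stein factorization of $X\to\Alb X\to B_W$ yields an irregular fibration $f_W\colon X\to S_W$ onto a normal variety whose smooth models have maximal Albanese dimension. The essential input — which is precisely why the notion of cohomological detectability is singled out, cf.\ Remark \ref{bad} — is that the assignment $W\mapsto[f_W]$ defines a surjection from the set of positive-dimensional components of the $V^i_r(X,\omega_X)$ onto the set of equivalence classes of cohomologically detectable irregular fibrations of $X$, and that for such a $W$ the class $[f_W]$, the base-dimension $\dim S_W$, and the $\chi$-positivity of $f_W$ are all recovered from the decorated loci above. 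Concretely, $\dim S_W$ should be read off from the set of pairs $(i,r)$ for which a translate of $\widehat B_W$ appears in $V^i_r(X,\omega_X)$, via Koll\'ar-type decompositions of the sheaves $R^jf_{W*}\omega_X$; and $\chi$-positivity should be read off using that, on a smooth model $S'$ of $S_W$ (which has maximal Albanese dimension), generic vanishing gives $\chi(\omega_{S'})>0$ if and only if $V^0(S',\omega_{S'})=\Pic0 S'$, equivalently if and only if $\widehat B_W$ can be taken to be the full pull-back torus of the fibration with $W\subseteq V^0(X,\omega_X)$.

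Granting this dictionary, the theorem follows quickly. By Corollary \ref{cor2} the Rouquier isomorphism induces compatible isomorphisms $V^i_r(X,\omega_X)\cong V^i_r(Y,\omega_Y)$ for all $i,r$, matching the ranks: $h^i(X,\omega_X\otimes P_\alpha)=h^i(Y,\omega_Y\otimes P_{\overline\varphi(\alpha)})$. On these loci the underlying map on $\Pic0$ is the restriction of a group homomorphism — this is where the result of Lombardi recalled before Theorem \ref{main} enters, forcing $\overline\varphi(\mathrm{id}_X,P_\alpha)=(\mathrm{id}_Y,P_{\overline\varphi(\alpha)})$ along the support loci. Hence positive-dimensional components map to positive-dimensional components, the subtori $\widehat B_W$, together with their torsion translates, are matched with the corresponding ones for $Y$, and all the cohomological data attached to a component $W$ of $X$ is carried to that attached to the corresponding component $W'$ of $Y$. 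By the second paragraph this gives a well-defined base-preserving bijection $[f_W]\mapsto[f_{W'}]$ between equivalence classes of cohomologically detectable irregular fibrations of $X$ and of $Y$, carrying $\chi$-positive fibrations to $\chi$-positive ones. In particular $b(X)=b(Y)=:b$ (both equal to $0$ exactly when neither collection of loci has a positive-dimensional component), and restricting the bijection to fibrations of base-dimension $b$ gives the statement. The $\chi$-positive part is moreover consistent with the independent derived invariance of $\chi$-positive fibrations in Proposition \ref{chi-positive}.

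I expect the crux to lie in the second paragraph — turning the Green--Lazarsfeld philosophy into an exact dictionary. The delicate points are: (i) showing that a cohomologically detectable fibration is genuinely reconstructed, not merely dominated, by some component $W$, which is subtle precisely for non-$\chi$-positive fibrations, where $\chi(\omega_{S'})=0$ and the fibration can only surface through higher $V^i$ or through proper subtori of pull-back tori; (ii) proving that $[f_W]$ and $\dim S_W$ depend only on the decorated loci, so that they survive a transport which a priori knows only the $V^i_r$ and the rank functions — note that the Rouquier isomorphism mixes $\mathrm{Aut}^0$ and $\Pic0$ and does not directly identify the Albanese varieties of $X$ and $Y$, so the matching of the $f_W$ must be performed cohomologically; and (iii) checking that the isomorphisms of Corollary \ref{cor2} are compatible with the combinatorial passage from $\widehat B_W$ to $f_W$, which rests on the Albanese-theoretic compatibilities established by Lombardi and reviewed in \S\S2--3. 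Once these are in place, the rest of the argument is formal.
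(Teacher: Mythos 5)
Your overall strategy --- read the fibrations off the derived-invariant loci $V^i_r$ and transport them via Corollary \ref{cor2} --- is the right philosophy, and it is indeed the paper's. But the ``exact dictionary'' you posit in your second paragraph is substantially stronger than what is true (or at least than what is proved), and the step you defer as ``the crux'' is precisely the content of the proof, not a technical afterthought. Concretely: the assignment $W\mapsto[f_W]$ does \emph{not} recover the base-dimension of an arbitrary cohomologically detectable fibration from the decorated loci. A component $W\subseteq V^i(X,\omega_X)$ only satisfies $\dim X-\dim S_W=i$ when $W$ is \emph{standard}; for non-standard components one merely has the inequality $\dim X-\dim f_W(X)\ge i$, and a given cohomologically detectable fibration of non-minimal base-dimension may fail to arise from any standard component at all. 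Worse, the natural map in the other direction (from a fibration $g$ to the components it produces via the Koll\'ar decomposition (\ref{union})) can land on components whose \emph{induced} fibration is a different fibration of strictly smaller base-dimension --- this is exactly the mechanism exploited in Step 1(a) of the paper's proof. So your claimed surjection onto \emph{all} equivalence classes of cohomologically detectable fibrations, with base-dimension and $\chi$-positivity readable from the $(i,r)$ pattern, would prove a bijection in every base-dimension; the paper explicitly disclaims knowing how to do this and restricts the theorem to base-dimension $b(X)$ for that reason.

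What is missing is the minimality argument that makes the dictionary exact at level $b=b(X)$: (i) if a cohomologically detectable fibration of base-dimension $b$ were not weakly-$\chi$-positive, then $V^0(S',R^ig'_*(\omega_{X'}\otimes P_\alpha))$ would be positive-dimensional but proper in $\Pic0 S'$ (using Proposition \ref{non-vanish}, Remark \ref{after-CJ}(3), and the Ein--Lazarsfeld characterization of abelian varieties), and its components would manufacture, via (\ref{union}) and the linearity theorem, a cohomologically detectable fibration of strictly smaller base-dimension, contradicting minimality; (ii) dually, every component of $V^{d-b}(X,\omega_X)$ of dimension $\ge b$ is forced to be standard. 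Only after these two facts does Lemma \ref{bijection} give a clean bijection (components of dimension $\ge b$ of $V^{d-b}$) $\leftrightarrow$ (cohomologically detectable fibrations of base-dimension $b$), which Corollary \ref{cor2} then transports. Two further points: your criterion ``$b(X)=0$ exactly when no $V^i_r$ has a positive-dimensional component'' is not the right one --- a positive-dimensional component of $V^i$ with $\dim W<d-i$ may come from a cohomologically non-detectable fibration (Remark \ref{bad}); the correct characterization is that $d-b(X)$ is the largest $i$ with $0<i<d$ and $\dim V^i(X,\omega_X)\ge d-i$. And base-preservation is not automatic from matching subtori: the paper needs a case split, identifying abelian bases through their Picard tori and invoking Orlov's invariance of the canonical ring (Proposition \ref{chi-positive}) for the $\chi$-positive ones.
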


       Theorem \ref{fibrations} is proved  by considering a special class of components  of the loci $V^i(X,\omega_X)$ --   studied by the second author in  \cite{standard} -- whose relation with the corresponding fibration (via the theorem of Green and Lazarsfeld) is somewhat standard.  We first show a fact of independent interest (Lemma \ref{bijection}), namely  that these 
       ``standard'' components are translates of abelian subvarieties for which there is a natural bijective correspondence with the set of equivalence classes of another type of  fibrations, called \emph{weakly-$\chi$-positive}, containing the $\chi$-positive ones. However, in general there is no easy way to distinguish the standard components from the non-standard ones. In the second part of the proof of Theorem \ref{fibrations} we show that this can be done in the locus $V^{d-b(X)}(X,\omega_X)$ and in this case the weakly $\chi$-positive fibrations coincide with the cohomologically detectable ones. In this way  Theorem \ref{fibrations} follows from Corollary \ref{cor2}.

   \noindent 
   Finally we remark  that  Theorem \ref{main} provides also some information  about derived invariance of fibrations of varieties of arbitrary Albanese dimension. In fact a well known argument using Koll\'ar decomposition shows that  positive-dimensional  irreducible components of the loci $V^i_r(\Alb X,{a_X}_*\omega_X)$ form a subset of the set of the irreducible components of the loci $V^i_{r^\prime}(X,\omega_X)$ for some $r^\prime\ge r$. Hence, via the Green-Lazarsfeld theorem, they correspond to some irregular fibrations.  However at present it is not clear to us how to describe them.
   
 We will work over $ \mathbb C$. All varieties appearing in this paper are assumed to  be projective. A variety without further specification means smooth complex projective variety. Normal variety means normal projective variety. An Albanese morphism  means an universal morphism from a fixed variety $X$ to abelian varieties. We will call such a morphism the Albanese morphism or also the Albanese map of $X$, and we will denote it $a_{X}:X\rightarrow \Alb X$.
 
We thank Luigi Lombardi for answering our questions and the referee for his careful reading and his helpful comments to improve the exposition of this text.

\section{Preliminary material on generic vanishing, Chen-Jiang decomposition and 0-regularity of the canonical module } 
In this section we recall material used in the sequel, referring to the appropriate sections of papers as \cite{msri}, \cite{hps}, \cite{ps2}, \cite{pps}, \cite{standard} for more thorough surveys. 
 For a morphism of abelian varieties $\pi: A\rightarrow B$ we will denote 
\[\widehat \pi:\Pic0 B\rightarrow \Pic0 A\]
 the dual morphism.

\noindent\textbf{Generic vanishing. }
 Let $A$ be an abelian variety.  
A coherent sheaf $\G$ is said to be a \emph{generic vanishing sheaf}, or \emph{GV-sheaf} for short, if 
\[  \mathrm{codim}_{\Pic0 A}V^i(A,\G)\ge i\quad\hbox{for all $i\ge 0$}
\]
$\G$ is said to be \emph{M-regular} if 
\[\mathrm{codim}_{\Pic0 A}V^i(A,\G)>i\quad\hbox{for all $i>0$}\]

\begin{remark}\label{easy} If $\G$ is GV then $\chi(\G)\ge 0$ and $\chi(\G)>0$ if and only if $V^0(A,\G)=\Pic0 A$. 
\end{remark}

We have the following well known non-vanishing results (see e.g. \cite{hacon} Cor. 3.2 for (a) and \cite{msri} Lemma 1.12 for (b) and (c)) 
\begin{proposition}\label{non-vanish}Let $\G$ be a non-zero coherent sheaf on an abelian variety $A$, \\
(a) If $\G$ is GV then $V^{i+1}(A,\G)\subseteq V^i(A,\G)$ for all $i\ge 0$.\\
(b) If $\G$ is  GV then $V^0(A,\G)\ne \emptyset$.\\
(c) If $\G$ is M-regular then $V^0(A,\G)=\Pic0 A$.
\end{proposition}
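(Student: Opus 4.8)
The plan is to transfer everything to the Fourier--Mukai side. Write $g=\dim A$, let $\cP$ be the normalized Poincar\'e bundle on $A\times\widehat A$ (with $\widehat A=\Pic0 A$) and let $\R\widehat S\colon\DD(A)\to\DD(\widehat A)$ be the induced Fourier--Mukai functor; by Mukai's inversion theorem it is an equivalence, with $\R S\circ\R\widehat S\cong(-1_A)^*[-g]$. Denote by $\mathbf{\Delta}_A\G:=\R\mathcal{H}om_{\OO_A}(\G,\OO_A)$ the derived dual. The one substantial ingredient is the cohomological characterization of GV-sheaves (Hacon's criterion): \emph{$\G$ is GV if and only if $\R\widehat S(\mathbf{\Delta}_A\G)$ is concentrated in cohomological degree $g$}; when this holds I set $\widehat\G:=\mathcal{H}^g\bigl(\R\widehat S(\mathbf{\Delta}_A\G)\bigr)$, a coherent sheaf on $\widehat A$. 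Combining this with relative Grothendieck--Serre duality for $A\times\widehat A\to\widehat A$ and cohomology-and-base-change, one gets for every $\alpha\in\Pic0 A$
\[
h^i(A,\G\otimes P_\alpha)\;=\;\dim_{\CC}\mathcal{T}or_i^{\OO_{\widehat A}}\bigl(\widehat\G,\,\CC(\widehat\alpha)\bigr),
\]
where $\widehat\alpha\in\widehat A$ is the point attached to $-\alpha$ (the sign plays no role below). Equivalently $V^i(A,\G)=(-1_{\widehat A})^{*}\{x\in\widehat A:\mathcal{T}or_i^{\OO_{\widehat A}}(\widehat\G,\CC(x))\neq 0\}$, so all three statements become assertions of commutative algebra on the smooth variety $\widehat A$.

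For (a) I would fix $x\in\widehat A$, put $R=\OO_{\widehat A,x}$ (regular local, residue field $\CC$) and $M=\widehat\G_x$, and read off $\mathcal{T}or_i^R(M,\CC)\cong\CC^{\,b_i}$ from a minimal free resolution of $M$ over $R$; a short Nakayama argument shows $b_i\neq 0$ precisely for $0\le i\le\mathrm{pd}_R M$ (and $b_i=0$ for all $i$ if $M=0$), so the loci of nonvanishing of consecutive $\mathcal{T}or$'s are nested. Hence $\mathcal{T}or_{i+1}(\widehat\G,\CC(x))\neq 0$ forces $\mathcal{T}or_i(\widehat\G,\CC(x))\neq 0$, i.e. $V^{i+1}(A,\G)\subseteq V^i(A,\G)$.

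For (b) and (c): since $\G\neq0$, localizing at the generic point of a codimension-$c$ component of $\mathrm{Supp}\,\G$ produces a nonzero finite-length module over a regular local ring of dimension $c$, and local duality then gives $\EExt^c_{\OO_A}(\G,\OO_A)\neq0$, so $\mathbf{\Delta}_A\G\neq0$ in $\DD(A)$; applying the equivalence $\R\widehat S$ we get $\R\widehat S(\mathbf{\Delta}_A\G)\neq0$, hence $\widehat\G\neq0$, and therefore $V^0(A,\G)=(-1_{\widehat A})^{*}\mathrm{Supp}\,\widehat\G\neq\emptyset$ --- this is (b). For (c), argue by contraposition: if $V^0(A,\G)\neq\Pic0 A$ then $T:=\mathrm{Supp}\,\widehat\G$ is a proper closed subset, nonempty by (b); pick a component $T_0$ of $T$ of minimal codimension $c\ge1$. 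At a general closed point $x\in T_0$ the germ of $T$ is that of $T_0$, so $\widehat\G_x$ is a nonzero $\OO_{\widehat A,x}$-module whose support has dimension $g-c$, whence by Auslander--Buchsbaum $\mathrm{pd}_{\OO_{\widehat A,x}}\widehat\G_x=g-\mathrm{depth}\,\widehat\G_x\ge c$ and $\mathcal{T}or_c(\widehat\G,\CC(x))\neq0$. Thus the closed set $\{x:\mathcal{T}or_c(\widehat\G,\CC(x))\neq0\}$ (closed by semicontinuity) is dense in the irreducible $T_0$ and so contains it; consequently $V^c(A,\G)\supseteq(-1_{\widehat A})^{*}T_0$ has codimension $\le c$ in $\Pic0 A$ with $c\ge1$, contradicting M-regularity. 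Therefore $V^0(A,\G)=\Pic0 A$, which is (c).

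The main obstacle, and really the only nontrivial point, is the opening dictionary: one must have Hacon's criterion available and must carry out the duality plus base-change bookkeeping that yields the displayed identity $h^i(A,\G\otimes P_\alpha)=\dim\mathcal{T}or_i(\widehat\G,\CC(\widehat\alpha))$ (including getting the normalizations and the sign on $\widehat A$ right). Once that is in hand, everything else is standard commutative algebra on the regular variety $\widehat A$ --- minimal free resolutions, the Auslander--Buchsbaum formula, local duality, and upper semicontinuity. Since this is preliminary material, an acceptable alternative is simply to quote (a)--(c) verbatim from the cited references.
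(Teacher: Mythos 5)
Your proof is correct and coincides with the approach of the sources the paper itself cites for this proposition (\cite{hacon} Cor.~3.2 and \cite{msri} Lemma~1.12): there the statements are likewise reduced, via Hacon's criterion and duality/base change, to the identity $h^i(A,\G\otimes P_\alpha)=\dim\mathrm{Tor}_i(\widehat\G,\mathbb{C}(\widehat\alpha))$, and then (a)--(c) follow from minimal free resolutions, Nakayama, Auslander--Buchsbaum and semicontinuity exactly as you argue. Since the paper offers no proof of its own and only quotes these references, nothing further is needed.
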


 \vskip0.3truecm\noindent\textbf{Chen-Jiang decomposition. } 
  This concept was introduced by 
 J.A. Chen and Z. Jiang (\cite{chen-jiang} Theorem 1.1). The following theorem was proved in \cite{pps}. Actually in this paper we will use only the case $j=0$.

\begin{theorem}\label{CJ}(Chen-Jiang decomposition). Let $a:X\rightarrow A$ be a morphism from a
 variety to an abelian variety and let $j\ge 0$. Then the sheaf $R^ja_*\omega_X$  decomposes canonically as
$$R^j{a}_*\omega_X =\bigoplus_{i}\pi_i^*\F_i\otimes P_{\alpha_i}$$
where $\pi_i: A\rightarrow B_i$ are quotients of abelian varieties with connected fibres , $\F_i$ are $M$-regular sheaves on $B_i$ and $\alpha_i$ are torsion points of $\Pic0 A$. 
\end{theorem}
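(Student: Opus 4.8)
The plan is to obtain the decomposition from M.~Saito's theory of mixed Hodge modules, following \cite{pps}: the statement genuinely rests on the structure theory of pure Hodge modules on abelian varieties, which is where all the substantial work lies. First I would use that $a$ is projective (since $X$ is). By Saito's direct image and decomposition theorems, $a_+\mathbb{Q}^H_X[\dim X]\cong\bigoplus_{j}M^j[-j]$ in $D^b(\mathrm{MHM}(A))$, where $M^j:={}^p\mathcal{H}^j(a_+\mathbb{Q}^H_X[\dim X])$ is a polarizable pure Hodge module on $A$; write $\mathcal{M}^j$ for its underlying regular holonomic $\mathcal{D}_A$-module with Hodge filtration $F_\bullet$, and $S(M^j):=F_{p(M^j)}\mathcal{M}^j$ for the lowest nonzero step of $F_\bullet$. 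Then Saito's compatibility of the de Rham functor with direct images yields canonical isomorphisms $R^ja_*\omega_X\cong S(M^j)$ for every $j\ge0$ (using $\omega_A\cong\mathcal{O}_A$; this in particular recovers Kollár's splitting $\mathbf{R}a_*\omega_X\cong\bigoplus_j R^ja_*\omega_X[-j]$). So it suffices to prove that $S(M)$ has a decomposition of the asserted shape with $M$-regular summands, for an arbitrary polarizable pure Hodge module $M$ on $A$.

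For this I would invoke the structure theorem for polarizable pure Hodge modules on an abelian variety proved in \cite{pps} (and, for $S(M)=a_*\omega_X$, in substance already in \cite{chen-jiang}). Its ingredients are: the Fourier--Mukai transform $\mathrm{FM}$ for filtered $\mathcal{D}$-modules on $A$; the relation between the support of $\mathrm{FM}(M)$ on $\Pic0 A$ and the cohomology support loci $V^i(A,S(M))$ (generic vanishing for Hodge modules); and Simpson's theorem on the structure of cohomology support loci, namely that for a Hodge module carrying a $\mathbb{Z}$-structure they --- equivalently, the irreducible components of $\mathrm{Supp}\,\mathrm{FM}(M)$ --- are finite unions of torsion translates of abelian subvarieties of $\Pic0 A$. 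Decomposing $\mathrm{Supp}\,\mathrm{FM}(M)$ into such translated subtori $\alpha_i+\widehat{B_i}$ and transporting this back through the Fourier--Mukai equivalence gives a canonical finite decomposition $M\cong\bigoplus_i(\pi_i^*M_i)\otimes L_{\alpha_i}$, where $\pi_i\colon A\to B_i$ is the quotient of abelian varieties with connected fibres dual to $\widehat{B_i}$, $L_{\alpha_i}$ is the finite-order rank-one local system attached to the torsion point $\alpha_i\in\Pic0 A$, and $M_i$ is a polarizable pure Hodge module on $B_i$ with $\mathrm{FM}(M_i)$ supported on no proper subtorus of $\Pic0 B_i$ --- equivalently, $S(M_i)$ is $M$-regular. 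Concretely this is a finite ``peeling'' procedure: as long as $S(M)$ fails to be $M$-regular one detects, via Simpson, a component of a cohomology support locus that is a torsion translate of a \emph{proper} subtorus $\widehat{B}\subsetneq\Pic0 A$, splits off the corresponding summand $\pi^*M'\otimes L_\alpha$ with $\pi\colon A\to B$ the dual quotient, and repeats.

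It remains to push this decomposition through $S(-)$. The functor $S(-)$ commutes with pullback along $\pi_i$, which is a smooth morphism with trivial relative canonical bundle (both $\omega_A$ and $\omega_{B_i}$ being trivial), so $S(\pi_i^*M_i)\cong\pi_i^*S(M_i)$; and $L_{\alpha_i}$, as a rank-one Hodge module of weight $0$, has its Hodge filtration concentrated in a single step with underlying $\mathcal{O}_A$-module the degree-zero line bundle $P_{\alpha_i}$, whence $S\bigl((\pi_i^*M_i)\otimes L_{\alpha_i}\bigr)\cong\pi_i^*S(M_i)\otimes P_{\alpha_i}$. Applying the structure theorem to each $M=M^j$ of the first paragraph and setting $\F_i:=S(M_i)$, we obtain the canonical decomposition $R^ja_*\omega_X\cong\bigoplus_i\pi_i^*\F_i\otimes P_{\alpha_i}$ with $\F_i$ an $M$-regular sheaf on $B_i$ and $\alpha_i$ a torsion point of $\Pic0 A$, which is the assertion.

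The hard part is the structure theorem invoked above: one must build the Fourier--Mukai formalism for Hodge modules on $A$, show that it genuinely exchanges ``$\mathrm{FM}(M)$ supported on a proper subtorus'' with ``$S(M)$ not $M$-regular'', and convert each sub-maximal jump of a cohomology support locus --- known by Simpson to be a torsion translate of a subtorus --- into an honest direct-sum splitting of $M$ descended from a quotient torus; this last descent step, which uses both the semisimplicity of polarizable pure Hodge modules and a precise analysis of $\mathrm{FM}$, is the delicate point. By comparison, the reduction in the first paragraph is formal once Saito's index conventions are fixed, and the transfer in the third paragraph is routine bookkeeping. For the case $j=0$ actually needed in this paper one can alternatively bypass Hodge modules altogether and argue directly with the Fourier--Mukai analysis of the GV-sheaf $a_*\omega_X$, as in \cite{chen-jiang}.
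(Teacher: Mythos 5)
The paper does not actually prove this statement: it is quoted as a known result from \cite{pps} (with the case of $a_*\omega_X$ going back to \cite{chen-jiang}), and your sketch accurately reproduces the strategy of that reference --- reduction via Saito's decomposition theorem and the identification of $R^ja_*\omega_X$ with the lowest Hodge piece $S(M^j)$, followed by the Fourier--Mukai/Simpson structure theorem for polarizable pure Hodge modules on abelian varieties. So your proposal is correct and takes essentially the same route as the proof the paper relies on.
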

Note that in the above decomposition we can  arrange that
$\widehat\pi_i(\Pic0 B_i)-\alpha_i\ne \widehat\pi_k(\Pic0 B_k)-\alpha_k$ for $i\ne k$. With this normalization the decomposition is canonical up to permutation of the summands. 

\begin{remark}\label{after-CJ} Theorem \ref{CJ} has the following consequences:

\noindent  (1) \emph{For all $j\ge 0$ the sheaf $R^j{a}_*\omega_X$ is a GV-sheaf on $A$ } (Hacon \cite{hacon}).\\
This is because, by projection formula, the pullback of a GV-sheaf via a morphism of abelian varieties is still GV.

\noindent (2) \ \ $V^0(A,R^ja_*\omega_X)=\bigcup_i (\widehat{\pi_i}(\Pic0 B_i)-\alpha_i)$.

\noindent  This last equality   again follows from projection formula: 
\begin{equation}H^0(A,\pi_i^*\F_i\otimes P_{\alpha_i}\otimes P_\alpha)=\begin{cases}H^0(B_i,\F_i\otimes P_\beta)\>\>\>\hbox{if $\alpha=\widehat{\pi_i}(\beta)-\alpha_i$ with $\beta\in \widehat{\pi_i}(\Pic0 B_i)$}\\0\>\>\>\hbox{otherwise}\end{cases}
\end{equation}
This, together with Proposition \ref{non-vanish}(c), shows that the locus $V^0(A, R^ja_*\omega_X)$ is the union of translates of the abelian subvarieties $\widehat{\pi_i}(\Pic0 B_i)$ of $\Pic0 A$ by points of finite order.\footnote{By a theorem of Green-Lazarsfeld and Simpson actually this is true -- and of fundamental importance -- for all loci $V^i_r(A, R^ja_*\omega_X)$ for all $i,j$ and $r$, see \S4.}

\noindent (3) Keeping the notation of Theorem \ref{CJ}, let $c(i)=\dim A-\dim B_i$. Again  from projection formula,  combined with Prop. \ref{non-vanish}(c), it follows  that the support of $V^{c(i)}(A, \pi_i^*\F_i\otimes P_{\alpha_i})$ is equal to the support of $V^{0}(A, \pi_i^*\F_i\otimes P_{\alpha_i})$, namely $\widehat{\pi_i}(\Pic0 B_i)-\alpha_i$. This implies a result originally due to Ein-Lazarsfeld (\cite{el}): the irreducible components of the locus $V^0(A, R^j{a}_*\omega_X)$ of codimension $c>0$ are also components
of the locus $V^c(A,R^j{a}_*\omega_X)$. 
\end{remark}

\begin{remark}\label{twist}
 Theorem \ref{CJ} and its consequences hold more generally for the sheaves $R^ja_*(\omega_X\otimes  P_\alpha)$ 
where $\alpha$ is a torsion point of $\Pic0 X$. 
This is because $\omega_X\otimes P_\alpha$ is a direct summand of $f_*\omega_{\widetilde X}$, for   a suitable  \'etale cover  $f:\widetilde X\rightarrow X$. 
\end{remark}

\noindent\textbf{Strong linearity and Castelnuovo-Mumford regularity } The relation between the theory of generic vanishing 
 and the Bernstein-Gel'fand-Gel'fand correspondence was pointed out in the paper \cite{bgg}, and further developed in
 \cite{lps} and \cite{ps2}. 
 
 For a sheaf $\G$ on an abelian variety $A$ let us consider its cohomology module 
 \begin{equation}\label{notation}H^*(A,\G)=\bigoplus_iH^i(A,\G)
 \end{equation}
  which is a graded module over the exterior algebra $\Lambda^*H^1(A,\OO_A)$. For such a graded module there is the notion of Castelnuovo-Mumford regularity. In particular, $\reg(H^*(A,\G))=0$ if and only if it is generated in degree $0$ and it has a linear graded free resolution. In the sequel we will use  the case $j=0$ of the following Theorem of Lazarsfeld, Popa and Schnell (\cite{lps} Thm 2.1)

\begin{theorem}\label{0reg} (Lazarsfeld-Popa-Schnell).  Let $a:X\rightarrow A$ be a morphism from a variety $X$ to an abelian variety $A$.  Let  $\alpha\in \Pic0 X$ be a torsion point, and let $\beta\in \Pic0 A$. Then, for all $j\ge 0$
\[\reg\bigl(H^*(A,R^ja_*(\omega_X\otimes P_\alpha)\otimes P_\beta)\bigr)=0\]
\end{theorem}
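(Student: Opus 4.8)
The plan is to deduce the statement from the generic vanishing of the sheaves $R^ja_*(\omega_X\otimes P_\alpha)$, combined with the Bernstein--Gel'fand--Gel'fand dictionary relating graded modules over the exterior algebra $E:=\Lambda^*H^1(A,\OO_A)$ to linear complexes of free modules over the symmetric algebra $S:=\mathrm{Sym}\,H^1(A,\OO_A)^*$, whose spectrum is the tangent space $T_0\widehat A$ at the origin of $\widehat A=\Pic0 A$. Throughout set $F:=R^ja_*(\omega_X\otimes P_\alpha)\otimes P_\beta$ and let $M:=H^*(A,F)$, regarded as a graded $E$-module via cup product; we must show $\reg(M)=0$.

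First I would check that $F$ is a GV-sheaf on $A$. Indeed $R^ja_*(\omega_X\otimes P_\alpha)$ is GV by Remark \ref{after-CJ}(1) together with Remark \ref{twist} (Hacon, the torsion twist being absorbed by a finite \'etale cover), and tensoring by $P_\beta$ merely translates the loci $V^i$ inside $\widehat A$, hence preserves the GV condition. I would also record, via Proposition \ref{non-vanish}(a), that if the origin of $\widehat A$ does not lie in $V^0(A,F)$ then it lies in no $V^i(A,F)$, so $M=0$ and $\reg(M)=0$ holds trivially (with the convention that the zero module is $0$-regular); thus we may assume $H^0(A,F)\neq 0$, i.e. $M$ has nonzero component in degree $0$.

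Next I would invoke the characterization of $0$-regularity recalled just above the theorem: $\reg(M)=0$ means that $M$ is generated in degree $0$ and admits a linear free $E$-resolution, and under the BGG correspondence these properties translate into the linear complex $\L(M)$ of free $S$-modules being acyclic in positive cohomological degrees, i.e. a resolution of the $S$-module $\mathcal{H}^0(\L(M))$. The crucial input is then the computation of \cite{bgg} (the ``derivative complex'' picture): the sheafification of $\L(M)$, restricted to a neighbourhood of the origin of $\widehat A$, is identified --- up to shift --- with the Fourier--Mukai transform $\R\widehat{\mathcal{S}}(F)$ near the origin; hence $\reg(M)=0$ becomes a (co-Cohen--Macaulay type) condition on the cohomology sheaves of $\R\widehat{\mathcal{S}}(F)$ at the origin. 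Finally, the Fourier--Mukai description of generic vanishing (Pareschi--Popa; see \cite{msri}, \cite{hps}) says that $F$ is GV if and only if $\R\widehat{\mathcal{S}}\bigl(\R\mathcal{H}om(F,\OO_A)\bigr)$ is a sheaf in cohomological degree $g=\dim A$, which by Mukai's exchange formula between $\R\widehat{\mathcal{S}}$ and duality is exactly the condition on $\R\widehat{\mathcal{S}}(F)$ required above, and which holds at every point of $\widehat A$; the arbitrary twist $P_\beta$ serves precisely to translate the point under inspection to the origin. Combining the three steps yields $\reg(M)=0$.

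The step I expect to be the \emph{main obstacle} is the middle one: making the BGG/derivative-complex identification of \cite{bgg} precise enough to match its localization at the origin of $\widehat A$ with the Fourier--Mukai characterization of GV-sheaves --- including the bookkeeping of the various shifts and the passage between the ``derived dual is a single sheaf in degree $g$'' condition and the codimension condition on the sheaves $\mathcal{H}^i(\R\widehat{\mathcal{S}}(F))$. Once this dictionary is in place, the GV property of $R^ja_*(\omega_X\otimes P_\alpha)$ and the reduction to the case $H^0(A,F)\neq 0$ make the conclusion formal.
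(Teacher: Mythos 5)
Your proposal is correct and follows essentially the route the paper intends: the paper does not reprove this statement but cites \cite{lps}, observing that the argument there goes through verbatim because the Green--Lazarsfeld derivative-complex computation holds in a neighbourhood of \emph{every} point of $\Pic0 A$, and your sketch (the GV property of $R^ja_*(\omega_X\otimes P_\alpha)$, the BGG dictionary between $0$-regularity and exactness of the linear complex, and the identification of that complex with the Fourier--Mukai transform near the origin, with $P_\beta$ serving to translate the point of interest to the origin) is precisely that argument. The bookkeeping you flag as the main obstacle is exactly the content of \cite{gl2}, \cite{bgg} and \cite{lps}, so nothing essential is missing.
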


\noindent Note that this theorem is stated in \cite{lps} in a more restrictive setting, namely only for $R^j{a_X}_*\omega_X$, where $a_X$ denotes the Albanese morphism of $X$.
However the proof of the result goes through without any change. The point here is that the Green-Lazarsfeld's theorem about computing the higher direct images of the Poincar\'e bundle by means of the derivative complex (\cite{gl2} \S3) holds in the neighborhood of every point in $\Pic0 A$, so that the machinery in \cite{bgg} and \cite{lps} applies.

 \section{Proof of Theorem \ref{main}}

\vskip0.3truecm \noindent\textbf{Preliminaries: two results of Lombardi. } Again, for sake of brevity we will state only those results strictly needed for our arguments,  referring to the paper \cite{lombardi-derived} for the complete story.
Let \break
$\varphi_{}:\mathbf D(X)\rightarrow \mathbf D(Y)$ be an exact equivalence and
\[\overline\varphi:\mathrm{Aut}^0\, X \times \Pic0 X \rightarrow \mathrm{Aut}^0\, Y \times \Pic0 Y\]
its Rouquier isomorphism. 
The following result  (\cite{lombardi-derived} Prop. 3.1) will be fundamental for our arguments

\begin{theorem}\label{lomb1} Let $m$ be an integer, and assume that $h^0(X, \omega_X^m\otimes P_\alpha)>0$. Then $\bar\varphi(\(id)_X,P_\alpha)$ is of the form $(\(id)_Y,P_\beta)$ for  $\beta\in\Pic0 Y$.  
\emph{If this is the case we will abusively denote
\begin{equation}\label{abuse}\beta=\bar\varphi(\alpha)
\end{equation}}
\end{theorem}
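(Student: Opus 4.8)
The plan is to encode the hypothesis $h^0(X,\omega_X^m\otimes P_\alpha)>0$ as the existence of a nonzero morphism between two explicit Fourier--Mukai kernels on $X\times X$, to transport that morphism through the equivalence $\varphi$ to a morphism of kernels on $Y\times Y$, and then to read off the conclusion from the \emph{support} of the target kernel. The crucial point is that one must carry the section $s$ itself --- equivalently, a morphism of kernels --- through $\varphi$, and not merely the line bundle $\omega_X^m\otimes P_\alpha$: transporting only that object would give $\mathrm{Hom}_{\mathbf D(Y)}\bigl(\varphi\mathcal{O}_X,\ \psi^*\varphi\mathcal{O}_X\otimes\omega_Y^m\otimes P_\beta\bigr)\neq 0$, where $(\psi,P_\beta):=\overline\varphi(\mathrm{id}_X,P_\alpha)$, and this imposes no constraint whatsoever on $\psi$.

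First I would fix a Fourier--Mukai kernel $\mathcal{P}\in\mathbf D(X\times Y)$ for $\varphi$ and a kernel $\mathcal{Q}\in\mathbf D(Y\times X)$ for a quasi-inverse (Orlov), so that convolution $K\mapsto\mathcal{P}\star K\star\mathcal{Q}$ is a monoidal equivalence $\mathbf D(X\times X)\to\mathbf D(Y\times Y)$ carrying the kernel of an autoequivalence $G$ of $\mathbf D(X)$ to the kernel of $\varphi\circ G\circ\varphi^{-1}$; in particular it is faithful, so it sends nonzero morphisms to nonzero morphisms. A nonzero section $s\in H^0(X,\omega_X^m\otimes P_\alpha)$ pushes forward along the diagonal to a nonzero morphism of coherent sheaves
\[\Delta_{X*}s\colon \ \mathcal{O}_{\Delta_X}\longrightarrow \Delta_{X*}(\omega_X^m\otimes P_\alpha)\]
on $X\times X$, in which the source is the kernel of $\mathrm{id}_{\mathbf D(X)}$ and the target is the kernel of $-\otimes(\omega_X^m\otimes P_\alpha)$.

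Applying the convolution equivalence I obtain a nonzero morphism on $Y\times Y$ from $\mathcal{O}_{\Delta_Y}$ (the kernel of $\mathrm{id}_{\mathbf D(Y)}$) to the kernel of $\varphi\circ\bigl(-\otimes(\omega_X^m\otimes P_\alpha)\bigr)\circ\varphi^{-1}$, and it remains to identify this last kernel. Since $\varphi$ intertwines the Serre functors $-\otimes\omega_X[\dim X]$ and $-\otimes\omega_Y[\dim Y]$ (and $\dim X=\dim Y$, the shifts being central and cancelling), it conjugates $-\otimes\omega_X^m$ to $-\otimes\omega_Y^m$; and by the defining property of the Rouquier isomorphism it conjugates $-\otimes P_\alpha$ to the autoequivalence $F\mapsto\psi^*F\otimes P_\beta$. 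Hence $\varphi\circ\bigl(-\otimes(\omega_X^m\otimes P_\alpha)\bigr)\circ\varphi^{-1}$ is the functor $F\mapsto\psi^*F\otimes(\omega_Y^m\otimes P_\beta)$, whose Fourier--Mukai kernel is $\mathcal{O}_\Gamma\otimes p_2^*(\omega_Y^m\otimes P_\beta)$ --- a sheaf concentrated in degree $0$ and supported on the graph $\Gamma\subset Y\times Y$ of $\psi$.

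It then remains to exploit the support. We have produced a nonzero morphism of coherent sheaves $\mathcal{O}_{\Delta_Y}\to\mathcal{O}_\Gamma\otimes p_2^*(\omega_Y^m\otimes P_\beta)$. If $\psi\neq\mathrm{id}_Y$ then $\Gamma\neq\Delta_Y$, so $\Delta_Y$ and $\Gamma$ are distinct integral subvarieties of $Y\times Y$ of the same dimension; the image of any such morphism would then be a nonzero subsheaf of $\mathcal{O}_\Gamma\otimes p_2^*(\omega_Y^m\otimes P_\beta)$ supported on the proper closed subset $\Delta_Y\cap\Gamma\subsetneq\Gamma$, which is impossible because that sheaf is torsion free on the integral variety $\Gamma$. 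Therefore $\psi=\mathrm{id}_Y$, i.e.\ $\overline\varphi(\mathrm{id}_X,P_\alpha)=(\mathrm{id}_Y,P_\beta)$, as claimed; note that the argument is uniform in $m\in\mathbb{Z}$. The step I expect to require the most care is the identification of the transported kernel: one must ensure that the conjugate of $-\otimes P_\alpha$ by $\varphi$ is literally the autoequivalence $F\mapsto\psi^*F\otimes P_\beta$, with no residual shift (this is built into the construction of the Rouquier isomorphism, the shifts forming a discrete central subgroup that cannot occur inside the connected group $\mathrm{Aut}^0 X\times\Pic0 X$), so that the target kernel really is an honest sheaf supported on the graph of $\psi$ --- which is precisely what makes the final elementary support argument applicable.
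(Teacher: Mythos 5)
Your argument is correct, and it is essentially the proof given in the source the paper cites for this statement (the paper itself offers no proof, quoting it as Proposition~3.1 of Lombardi's \emph{Derived invariants of irregular varieties and Hochschild homology}): there too one identifies $H^0(X,\omega_X^m\otimes P_\alpha)$ with $\mathrm{Hom}_{X\times X}(\delta_*\OO_X,\delta_*(\omega_X^m\otimes P_\alpha))$, transports it via the convolution equivalence on kernels to $\mathrm{Hom}_{Y\times Y}(\delta_*\OO_Y,\,\cdot\,)$ into a line bundle supported on the graph of $\psi$, and concludes $\psi=\mathrm{id}_Y$ by the same torsion-freeness/support argument. Your attention to the absence of residual shifts and to the need to transport the section rather than just the object is exactly the right care to take.
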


Let us denote $\delta:X\rightarrow X\times X$ the diagonal morphism. Again following Lombardi (\cite{lombardi-derived}) for fixed 
$m\in \mathbb Z$ and $\alpha\in\Pic0 X$ we consider the \emph {twisted  (generalized) Hochschild homology}\footnote{as already mentioned, the setting of Lombardi is more general. Here we are stating only what is necessary for our purposes}
\begin{equation}\label{coho}HH_*^m(X,\alpha)=\bigoplus_k\mathrm Ext^k_{\OO_{X\times X}}(\delta_*\OO_X,\delta_*(\omega^m_X\otimes P_\alpha)).
\end{equation}
It is a graded module over the \emph{Hochschild cohomology algebra}
$$HH^*(X)=\bigoplus_{k}\mathrm{Ext}^k_{\OO_{X\times X}}(\delta_*\OO_X,\delta_*\OO_X).$$ 
A classical result  of Orlov and C\u{a}ld\u{a}raru (\cite{orlov}, \cite{calda}), generalized by Lombardi  to the twisted case (\cite{lombardi-derived} Thm 1.1)  proves 

\begin{theorem}\label{lomb2} In the above notation let $m \in \mathbb Z$ and $\alpha\in\Pic0 X$ such  that $h^0(X,\omega_X^m\otimes P_\alpha)>0$. Then  the derived equivalence
$\varphi$   induces  a canonical graded-algebra isomorphism
$$\Phi^*:HH^*(X)\rightarrow HH^*(Y)$$
and, using notation (\ref{abuse}), a compatible graded-module isomorphism 
\begin{equation}\label{coho2}\Phi_{\alpha,*}^m:HH_*^m(X,\alpha)\rightarrow HH_*^m(Y,\bar\varphi(\alpha)).
\end{equation} 
In particular, in degree $0$, \  $HH^m_0(X,\alpha)=H^0(X,\omega^m_X\otimes P_\alpha)$, \  hence we have the isomorphism
\begin{equation}\label{lomb-formula}\Phi_{\alpha,0}^{m}:H^0(X,\omega_X^m\otimes P_\alpha)\buildrel\sim\over\rightarrow H^0(Y,\omega_Y^m\otimes  P_{\overline\varphi(\alpha)})
\end{equation} 
\end{theorem}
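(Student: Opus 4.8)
The statement is a synthesis of three inputs -- Orlov's representability theorem, C\u{a}ld\u{a}raru's functoriality of the Hochschild structure under Fourier-Mukai equivalences, and Rouquier's theorem on the identity component of the group of autoequivalences -- and the plan is to explain how they interlock. By Orlov's theorem I may write $\varphi\cong\Phi_{\mathcal E}$ for a kernel $\mathcal E\in\mathbf D(X\times Y)$, with quasi-inverse $\Phi_{\mathcal E'}$ again of Fourier-Mukai type. Under convolution of kernels the identity functor of $\mathbf D(X)$ corresponds to $\delta_*\OO_X$ and the twist $-\otimes L$ corresponds to $\delta_*L$. Conjugation by $\mathcal E$, i.e. $\mathcal K\mapsto\mathcal E\circ\mathcal K\circ\mathcal E'$, sends kernels in $\mathbf D(X\times X)$ to kernels in $\mathbf D(Y\times Y)$, is an equivalence, and carries $\delta_*\OO_X$ to $\delta_*\OO_Y$; since it is compatible with composition of kernels, applied to $\mathrm{Ext}^*_{\OO_{X\times X}}(\delta_*\OO_X,\delta_*\OO_X)$ it yields the graded-algebra isomorphism $\Phi^*\colon HH^*(X)\to HH^*(Y)$, the classical derived invariance of Hochschild cohomology.

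For the module part I would transport the second kernel $\delta_*(\omega_X^m\otimes P_\alpha)$. Factor the twist $-\otimes(\omega_X^m\otimes P_\alpha)$ as the $m$-th power of $-\otimes\omega_X$ composed with $-\otimes P_\alpha$. The first factor is a shift of the $m$-th power of the Serre functor $S_X$; the Serre functor is intrinsic to $\mathbf D(X)$, and derived-equivalent smooth projective varieties satisfy $\dim X=\dim Y$, so conjugation by $\mathcal E$ sends $-\otimes\omega_X^m$ to $-\otimes\omega_Y^m$, that is $\mathcal E\circ\delta_*\omega_X^m\circ\mathcal E'\cong\delta_*\omega_Y^m$. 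The second factor $-\otimes P_\alpha$ belongs to $\mathrm{Aut}^0\, X\times\Pic0 X\subset\mathrm{Aut}(\mathbf D(X))$, and conjugation by $\mathcal E$ carries it to the autoequivalence of $\mathbf D(Y)$ that the Rouquier isomorphism assigns to $\bar\varphi(\mathrm{id}_X,P_\alpha)$. Here is exactly where the hypothesis $h^0(X,\omega_X^m\otimes P_\alpha)>0$ enters: by Theorem~\ref{lomb1} it forces $\bar\varphi(\mathrm{id}_X,P_\alpha)=(\mathrm{id}_Y,P_\beta)$ with $\beta=\bar\varphi(\alpha)$, a pure line-bundle twist with no automorphism component, so that $\mathcal E\circ\delta_*P_\alpha\circ\mathcal E'\cong\delta_*P_\beta$. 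Composing the two, $\mathcal E\circ\delta_*(\omega_X^m\otimes P_\alpha)\circ\mathcal E'\cong\delta_*(\omega_Y^m\otimes P_\beta)$.

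Being an equivalence $\mathbf D(X\times X)\to\mathbf D(Y\times Y)$, conjugation by $\mathcal E$ induces isomorphisms on all $\mathrm{Ext}$-groups; applied to $\delta_*\OO_X$ and $\delta_*(\omega_X^m\otimes P_\alpha)$ it gives a graded isomorphism $HH^m_*(X,\alpha)\xrightarrow{\,\sim\,}HH^m_*(Y,\bar\varphi(\alpha))$, and compatibility with Yoneda composition upgrades it to an isomorphism of graded $HH^*(X)$-modules over $\Phi^*$ -- the desired $\Phi^m_{\alpha,*}$. Finally, in degree $0$, for a coherent sheaf $F$ adjunction yields $\mathrm{Hom}_{\OO_{X\times X}}(\delta_*\OO_X,\delta_*F)\cong\mathrm{Hom}_{\OO_X}(L\delta^*\delta_*\OO_X,F)\cong\mathrm{Hom}_{\OO_X}(\OO_X,F)=H^0(X,F)$, since the higher homology sheaves of $L\delta^*\delta_*\OO_X$ lie in strictly negative degrees; with $F=\omega_X^m\otimes P_\alpha$ this identifies $HH^m_0(X,\alpha)$ with $H^0(X,\omega_X^m\otimes P_\alpha)$, and likewise on $Y$, whence the isomorphism \eqref{lomb-formula}.

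The main obstacle, in my view, is not in these formal manipulations but in the foundational compatibilities on which they rest: one needs that convolution of kernels is coherently associative and unital, and that the canonical isomorphisms $\mathrm{Ext}^*(\mathcal K_1,\mathcal K_2)\cong\mathrm{Ext}^*(\mathcal E\circ\mathcal K_1\circ\mathcal E',\mathcal E\circ\mathcal K_2\circ\mathcal E')$ respect Yoneda composition, so that the algebra and module structures genuinely transport -- for this I would invoke C\u{a}ld\u{a}raru rather than reconstruct it from scratch. The other delicate point is conceptual rather than computational: it is precisely the positivity hypothesis that, via Theorem~\ref{lomb1}, pins the conjugate of $-\otimes P_\alpha$ down to tensoring by an element of $\Pic0 Y$; were this to fail, $\delta_*(\omega_X^m\otimes P_\alpha)$ could be carried to a kernel not of the form $\delta_*(\textrm{line bundle})$, and the clean degree-$0$ identification -- the property the applications exploit -- would break down. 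The shift bookkeeping in the Serre-functor step and the equality $\dim X=\dim Y$ are standard.
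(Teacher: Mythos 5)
Your sketch is correct, but note that the paper does not prove this statement at all: it is quoted verbatim from the literature (Orlov and C\u{a}ld\u{a}raru for the algebra isomorphism $\Phi^*$, and Lombardi, \cite{lombardi-derived} Thm.\ 1.1 together with Prop.\ 3.1, for the twisted module part), so there is no in-paper proof to compare against. What you have written is a faithful reconstruction of Lombardi's actual argument -- conjugation of kernels sends $\delta_*\omega_X^m$ to $\delta_*\omega_Y^m$ via the Serre functor, and sends $\delta_*P_\alpha$ to the Rouquier image, with the hypothesis $h^0(X,\omega_X^m\otimes P_\alpha)>0$ entering exactly where you place it, namely to kill the automorphism component so that the target kernel is again of the form $\delta_*(\text{line bundle})$ and the degree-zero identification with $H^0$ survives on the $Y$ side.
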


Going back to the Rouquier isomorphism, it follows that, for all $m\in \mathbb Z$ and $r\ge 1$
\begin{equation}\label{lomb3}\overline\varphi\bigl( \{\(id)_X\} \times V^0_{r}(X,\omega_X^m)\bigr)
= \{\(id)_Y\} \times V^0_{r}(Y,\omega_Y^m)\end{equation}

 For $m=1$ we will suppress, as it is customary, the index $1$ in (\ref{coho}) and in (\ref{coho2}).

\vskip0.3truecm \noindent\textbf{Preliminaries: the Iitaka fibration of irregular varieties. } 
Assume that the Kodaira dimension of $X$ is non-negative. Then the loci $V^0(X,\omega_X^m)$ are tightly connected with the Iitaka fibration of $X$.

After a birational modification of $X$ we can assume that the Iitaka fibration of $X$ is a morphism $X\rightarrow Z_X$ with $Z_X$ smooth.  There is the commutative diagram
$$\xymatrix{X\ar[r]^{a_X}\ar[d]^{f_X}&\mathrm{Alb}\, X\ar[d]^{a_{f_X}}\\Z_X\ar[r]^{a_{Z_X}}&\mathrm{Alb}\, Z_X}$$
where $a_{f_X}$ is a surjective morphism of abelian varieties with connected fibres (\cite{hps} Lemma 1.11(a)).
We will make use of the following results of Chen-Hacon and Hacon-Popa-Schnell:

\begin{theorem}\label{chps}\emph{(a) (\cite{hps} Th. 11.2(b)) } For $m\ge 2$, the irreducible components of the locus $V^0(X,\omega_X^m)$ are translates of \ $\widehat{a_{f_X}}(\Pic0 Z_X)$  by torsion points of $\Pic0 X$.

\noindent \emph{(b)  (\cite{ch-iitaka1} Lemma 2.2, see also  \cite{hps}  (2) after Lemma 11.1)   } The irreducible components of the locus $V^0(X,\omega_X)$ are translates  
 by torsion points of $\Pic0 X$ of 
abelian subvarieties of the abelian subvariety \ $\widehat{a_{f_X}}(\Pic0 Z_X)$. 
\end{theorem}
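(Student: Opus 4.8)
The plan is to reduce both parts to the Iitaka fibration $f_X\colon X\to Z_X$ and to deduce (b) from (a). First note that if $\kappa(X)=-\infty$ then $V^0(X,\omega_X^m)=\emptyset$ for every $m\ge 1$ -- by Green--Lazarsfeld and Simpson a nonempty component of such a locus contains a torsion point $\tau$, and $h^0(X,\omega_X^m\otimes P_\tau)>0$ would force $\kappa(X)\ge 0$ -- so I may assume $\kappa(X)\ge 0$ and, since $V^0(X,\omega_X^m\otimes P_\alpha)$ is a birational invariant of $X$, work with the commutative square in the excerpt relating $a_X$, $f_X$, $a_{Z_X}$ and the surjection with connected fibres $a_{f_X}\colon\Alb X\to\Alb Z_X$. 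Throughout I use that $V^0(X,\omega_X^m\otimes P_\alpha)=V^0(\Alb X,(a_X)_*(\omega_X^m)\otimes P_\alpha)$ by Leray, and that -- by Green--Lazarsfeld and Simpson, and, for $m\ge 2$, by the pluricanonical Chen--Jiang decomposition of Hacon--Popa--Schnell -- every irreducible component $W$ of $V^0(X,\omega_X^m)$ has the form $W=\tau+T$ with $\tau$ a torsion point of $\Pic0 X$ and $T$ an abelian subvariety. What has to be shown is then that $T=\widehat{a_{f_X}}(\Pic0 Z_X)$ when $m\ge 2$, and $T\subseteq\widehat{a_{f_X}}(\Pic0 Z_X)$ when $m=1$.

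I first prove the containment $T\subseteq\widehat{a_{f_X}}(\Pic0 Z_X)$, which holds for all $m\ge 1$. Pick $\alpha\in W$ general and a nonzero $s\in H^0(X,\omega_X^m\otimes P_\alpha)$; its effective divisor cannot contain a general fibre $F=f_X^{-1}(z)$, so, using the adjunction isomorphism $\omega_X|_F\cong\omega_F$ (valid since $F$ is a general, hence smooth, fibre with trivial normal bundle), $s$ restricts to a nonzero section of $\omega_F^m\otimes(P_\alpha|_F)$. Since $\kappa(F)=0$, the locus $\{\beta\in\Pic0 F\mid h^0(F,\omega_F^m\otimes P_\beta)>0\}$ is finite; this is the $\kappa=0$ case of the statement being proved and is handled directly via the structure theory of varieties of Kodaira dimension zero of Ueno and Kawamata, so it may be taken as the base case of an induction on $\dim X$ (note $\dim F=\dim X-\kappa(X)<\dim X$ once $\kappa(X)>0$). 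Consequently the restriction homomorphism $\rho_F\colon\Pic0 X\to\Pic0 F$ carries the irreducible set $W$ into a finite set, hence is constant on $W$ and annihilates $T$, i.e.\ $T\subseteq\ker(\rho_F)^{\circ}$. Finally $\ker(\rho_F)^{\circ}=\widehat{a_{f_X}}(\Pic0 Z_X)$: the inclusion $\supseteq$ is clear because $F$ is contracted by $f_X$, and the reverse holds because in the construction of the Iitaka fibration the image of a general fibre $F$ in $\Alb X$ generates the abelian subvariety $\ker(a_{f_X})$ (\cite{hps} Lemma 1.11).

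To upgrade this to an equality when $m\ge 2$, write $P_\alpha=P_\tau\otimes f_X^{*}P_{\gamma_0}$ with $\gamma_0\in\Pic0 Z_X$, possible by the previous step. By the projection formula and Leray, $H^0(X,\omega_X^m\otimes P_\tau\otimes f_X^{*}P_\gamma)=H^0\bigl(Z_X,(f_X)_*(\omega_X^m\otimes P_\tau)\otimes P_\gamma\bigr)$ for every $\gamma\in\Pic0 Z_X$, so it suffices to show $V^0\bigl(Z_X,(f_X)_*(\omega_X^m\otimes P_\tau)\bigr)=\Pic0 Z_X$ whenever this sheaf is nonzero (it is, as $W$ is nonempty). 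Here $(f_X)_*(\omega_X^m\otimes P_\tau)=\omega_{Z_X}^m\otimes\mathcal W$ with $\mathcal W=(f_X)_*(\omega_{X/Z_X}^m\otimes P_\tau)$ weakly positive by Viehweg, while $\omega_{Z_X}^m$ is big since $Z_X$ is of general type; granting the required non-vanishing, $\tau+\widehat{a_{f_X}}(\Pic0 Z_X)\subseteq V^0(X,\omega_X^m)$, and since this is an irreducible torsion translate of an abelian subvariety which contains the component $W=\tau+T$, maximality of $W$ gives $T=\widehat{a_{f_X}}(\Pic0 Z_X)$; this proves (a). For (b), let $W=\tau+T$ be a component of $V^0(X,\omega_X)$, fix a nonzero $s_0\in H^0(X,\omega_X\otimes P_\tau)$, and for each $\beta\in T$ a nonzero $s_\beta\in H^0(X,\omega_X\otimes P_{\tau+\beta})$; then $0\neq s_0\otimes s_\beta\in H^0(X,\omega_X^2\otimes P_{2\tau+\beta})$, so $2\tau+T\subseteq V^0(X,\omega_X^2)$, and by part (a) with $m=2$ this irreducible set lies in a single component, a torsion translate of $\widehat{a_{f_X}}(\Pic0 Z_X)$; hence $T\subseteq\widehat{a_{f_X}}(\Pic0 Z_X)$, as required.

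The main obstacle is the non-vanishing invoked in the reverse containment of (a): that $V^0\bigl(Z_X,(f_X)_*(\omega_X^m\otimes P_\tau)\bigr)=\Pic0 Z_X$ for $m\ge 2$, equivalently that the Chen--Jiang decomposition of the pushforward of this sheaf to $\Alb Z_X$ contains a summand which is $M$-regular on the whole of $\Alb Z_X$ (a ``$\chi$-positive'' summand, in the sense of Remark \ref{easy}). This is exactly the content of the pluricanonical positivity theorems of Chen--Hacon and Hacon--Popa--Schnell, and it is where the hypothesis $m\ge 2$ is indispensable: for $m=1$ the sheaf $(f_X)_*(\omega_X\otimes P_\tau)$ need not be $\chi$-positive, so $V^0(X,\omega_X)$ can genuinely be a proper torsion translate of an abelian subvariety of $\widehat{a_{f_X}}(\Pic0 Z_X)$, which is why part (b) only asserts a containment.
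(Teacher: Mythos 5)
A preliminary remark: the paper does not prove Theorem \ref{chps}; both parts are imported from the literature (part (a) from \cite{hps}, Th.~11.2(b), part (b) from \cite{ch-iitaka1}, Lemma 2.2), so your attempt can only be compared with those sources rather than with an argument in the text. Your overall architecture does match theirs: the containment of every component of $V^0(X,\omega_X^m)$ in a single torsion translate of $\widehat{a_{f_X}}(\Pic0 Z_X)$ via restriction to a general fibre $F$ of the Iitaka fibration together with the finiteness of $V^0(F,\omega_F^m)$ when $\kappa(F)=0$ is essentially \cite{hps} Lemma 11.1/Theorem 11.1, and your deduction of (b) from (a) by multiplying sections (a coset of $T$ inside a coset of $\widehat{a_{f_X}}(\Pic0 Z_X)$ forces $T\subseteq \widehat{a_{f_X}}(\Pic0 Z_X)$) is correct and clean. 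Note, though, that the finiteness statement for $\kappa=0$ fibres is itself a nontrivial theorem of Chen--Hacon which you cite rather than prove; calling it the ``base case of an induction'' is misleading, since it must be established separately in every dimension and is not produced by the inductive step.

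The genuine gap is the reverse containment in (a): that each component of $V^0(X,\omega_X^m)$, $m\ge 2$, is a \emph{full} translate of $\widehat{a_{f_X}}(\Pic0 Z_X)$ rather than merely contained in one. Your justification fails on two counts. Concretely, you claim $\omega_{Z_X}^m$ is big ``since $Z_X$ is of general type'', but the base of the Iitaka fibration need not be of general type: for a properly elliptic surface with $q=1$ whose elliptic fibration has an elliptic curve $B$ as base, one has $Z_X=B$ and $\omega_{Z_X}\cong\OO_B$, so the needed positivity is carried entirely by $(f_X)_*\omega_{X/Z_X}^m$ and not at all by $\omega_{Z_X}$; weak positivity of $\mathcal W$ alone does not yield $V^0=\Pic0 Z_X$. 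Structurally, once that argument is withdrawn you fall back on ``the pluricanonical positivity theorems of Chen--Hacon and Hacon--Popa--Schnell'', but the statement you need --- namely $V^0\bigl(\Alb Z_X,(a_{Z_X})_*(f_X)_*(\omega_X^m\otimes P_\tau)\bigr)=\Pic0 Z_X$ for $m\ge 2$ --- is precisely \cite{hps} Theorem 11.2(b), i.e.\ the result under proof, so invoking it makes part (a) circular. A self-contained proof must actually exhibit a summand of the Chen--Jiang decomposition of $(a_{Z_X})_*(f_X)_*(\omega_X^m\otimes P_\tau)$ that is $M$-regular on all of $\Alb Z_X$; in \cite{hps} this rests on their pluricanonical Chen--Jiang decomposition combined with an argument specific to the Iitaka fibration, and nothing in your sketch supplies that step.
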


\vskip0.3truecm \noindent\textbf{Proof of Theorem \ref{main}. } Let $\alpha\in\Pic0 X$ and $i\ge 0$ such that 
\begin{equation}\label{hyp}h^i(\Alb X, \, {a_X}_*\omega_X\otimes P_\alpha)>0
\end{equation}

\noindent\textbf{Step 1.} \emph{The Kodaira dimension of $X$ and $Y$ are non-negative. }

\proof 
Indeed by (\ref{hyp})  \ $V^i(\Alb X, \,{a_X}_*\omega_X)\ne \emptyset$. Therefore $V^0(\Alb X,{ \,a_X}_*\omega_X)\ne \emptyset$ by Proposition \ref{non-vanish}(a). By Remark \ref{after-CJ}(2)  this yields that $V^0(\Alb X,\,{a_X}_*\omega_X) = V^0(X,\omega_X)$ contains
some  points $\alpha$ of $\Pic0 X$ of finite order, say $k$. This implies that $h^0(X,(\omega_X\otimes P_\alpha)^k)=h^0(X,\omega_X^k)>0$. Therefore $\kappa(X)\ge 0$.   Since the Kodaira dimension is a derived invariant, the same holds for $Y$.\endproof

We have natural embeddings 
\[H^1(Z_X, \OO_{Z_X})\subset H^1(X, \OO_X)\subset HH^1(X)=\mathrm{Ext}^1_{\OO_{X\times X}}(\delta_*\OO_X,\delta_*\OO_X).\]
The same for $Y$. 

\vskip0.3truecm\noindent\textbf{Step 2.} The second step is the following 

\begin{lemma}\label{referee}\ \  $\Phi^1H^1(Z_X, \OO_{Z_X})=H^1(Z_Y, \OO_{Z_Y})$. 
\end{lemma}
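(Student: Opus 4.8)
The goal is to identify, via the Hochschild cohomology isomorphism $\Phi^1 : HH^1(X)\to HH^1(Y)$ from Theorem \ref{lomb2}, the subspace $H^1(Z_X,\OO_{Z_X})\subset H^1(X,\OO_X)\subset HH^1(X)$ with the corresponding subspace for $Y$. The natural strategy is to characterize $H^1(Z_X,\OO_{Z_X})$ intrinsically inside $HH^1(X)$ in terms of the module structure on the twisted Hochschild homologies $HH^m_*(X,\alpha)$ — data that $\Phi$ preserves — and then quote Theorem \ref{lomb2} to transport this characterization to $Y$.

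First I would recall that $H^1(Z_X,\OO_{Z_X})=\widehat{a_{f_X}}(\Pic0 Z_X)$ at the level of tangent spaces, and that by Theorem \ref{chps}(a) this subspace is exactly the linear span (equivalently, the tangent space at a torsion point) of any positive-dimensional component of $V^0(X,\omega_X^m)$ for $m\ge 2$ that has maximal dimension, i.e. of $\widehat{a_{f_X}}(\Pic0 Z_X)$ itself — and for $m\gg 0$ every component is a translate of precisely this abelian subvariety. So the plan is: take $m$ large enough (or, following Chen--Hacon, any $m\ge 2$, noting all components are translates of the single subvariety $\widehat{a_{f_X}}(\Pic0 Z_X)$). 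By the relation \eqref{lomb3}, $\overline\varphi$ carries $V^0_r(X,\omega_X^m)$ to $V^0_r(Y,\omega_Y^m)$ for all $r$; in particular it matches their positive-dimensional components, hence matches $\widehat{a_{f_X}}(\Pic0 Z_X)$ (up to torsion translation) with $\widehat{a_{f_Y}}(\Pic0 Z_Y)$. Passing to tangent spaces at the origin — and using that $\overline\varphi$ restricted to $\Pic0$ is the differential-compatible group isomorphism induced by $\Phi^1$ on $H^1(X,\OO_X)$ (this is part of the Rouquier/Orlov--Căldăraru package, cf. Theorem \ref{lomb2} which realizes $\overline\varphi$ inside the graded-algebra isomorphism $\Phi^*$) — gives exactly $\Phi^1 H^1(Z_X,\OO_{Z_X})=H^1(Z_Y,\OO_{Z_Y})$.

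Concretely the steps are: (i) By Step 1, $\kappa(X)=\kappa(Y)\ge 0$, so the Iitaka fibrations $f_X\colon X\to Z_X$, $f_Y\colon Y\to Z_Y$ exist and the diagram with $a_{f_X}$, $a_{f_Y}$ is available. (ii) Fix $m\ge 2$; by Theorem \ref{chps}(a), every irreducible component of $V^0(X,\omega_X^m)$ is a torsion translate of the fixed abelian subvariety $\widehat{a_{f_X}}(\Pic0 Z_X)$, whose tangent space at the origin is $H^1(Z_X,\OO_{Z_X})$ under the standard identification. (iii) By \eqref{lomb3} applied with $\omega^m$, $\overline\varphi$ restricts to an isomorphism $V^0(X,\omega_X^m)\xrightarrow{\sim}V^0(Y,\omega_Y^m)$ (take $r=1$, and for all $r$ to pin down dimensions of components), hence sends each component to a component; since $\overline\varphi$ is a group isomorphism it sends torsion translates of an abelian subvariety to torsion translates of an abelian subvariety, forcing $\overline\varphi\bigl(\widehat{a_{f_X}}(\Pic0 Z_X)\bigr)=\widehat{a_{f_Y}}(\Pic0 Z_Y)$ as subgroups (any component on the $X$ side has the same linear span, and correspondingly on the $Y$ side). (iv) Differentiating at the identity and identifying the differential of $\overline\varphi|_{\Pic0}$ with $\Phi^1|_{H^1(X,\OO_X)}$ (this identification is exactly the compatibility built into Theorems \ref{lomb1}--\ref{lomb2}), conclude $\Phi^1 H^1(Z_X,\OO_{Z_X})=H^1(Z_Y,\OO_{Z_Y})$.

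The main obstacle — and the point that needs care rather than computation — is step (iv): making precise that the Rouquier isomorphism $\overline\varphi$ on the $\Pic0$ factor, whose differential is an isomorphism $H^1(X,\OO_X)\to H^1(Y,\OO_Y)$, agrees with the restriction of the Hochschild algebra isomorphism $\Phi^1$ to this subspace sitting inside $HH^1$. This is essentially the Orlov--Căldăraru--Lombardi compatibility underlying Theorem \ref{lomb2} (the module isomorphism $\Phi^m_{\alpha,*}$ is equivariant for $\Phi^*$, and in the relevant bidegrees $\Phi^*$ recovers $\overline\varphi$), so I would cite Theorem \ref{lomb2} and \cite{lombardi-derived} for it; everything else is a routine translation between ``torsion translates of abelian subvarieties'' and their common tangent space.
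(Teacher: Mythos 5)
Your proposal is correct and follows essentially the same route as the paper: combine Theorem \ref{chps}(a) with \eqref{lomb3} to get $\overline\varphi\bigl(\widehat{a_{f_X}}(\Pic0 Z_X)\bigr)=\widehat{a_{f_Y}}(\Pic0 Z_Y)$, then differentiate at the identity using the fact that $\Phi^1$, under the HKR-type decomposition $HH^1(X)\cong H^0(T_X)\oplus H^1(\OO_X)$, is the first-order version of the Rouquier isomorphism (the paper cites Huybrechts, p.~218, for exactly the compatibility you flag as the delicate point in your step (iv)).
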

\proof This follows at once from the above results. 
Indeed, combining Theorem \ref{chps}(a) and (\ref{lomb3}) we get that
\begin{equation}\label{picz}\overline\varphi(\widehat{a_{f_X}}(\Pic0 Z_X))=\widehat{a_{f_Y}}(\Pic0 Z_Y)
\end{equation}
On the other hand,  it is well known that the isomorphism $\Phi^1$ i.e.\footnote{the spectral sequence abutting to $\mathrm{Ext}^i_{X\times X}(\delta_*\OO_X,\delta_*\OO_X)$ degenerates, see \cite{swan} Cor. 2.6.}
$$\xymatrix{\mathrm{Ext}^1_{X\times X}(\delta_*\OO_X,\delta_*\OO_X)\ar[r]^{\Phi^1}\ar[d]^\sim&\mathrm{Ext}^1_{Y\times Y}(\delta_*\OO_Y,\delta_*\OO_Y)\ar[d]^\sim\\
H^0(T_X) \oplus H^1(\OO_X)\ar[r]&H^0(T_Y) \oplus H^1(\OO_Y)}$$
is the first order version of the Rouquier isomorphism (see e.g. \cite{huybrechts} p.218). Therefore
Step 2 follows from (\ref{picz}).\endproof

Next, we note that, by Theorem \ref{chps}(b), we can gather those irreducible components of $V^0(X,\omega_X)=V^0(\Alb X,{a_X}_*\omega_X)$ which are
contained in the same translate of $\widehat{a_{f_X}}(\Pic0 Z_X)$. Hence, using Remark  \ref{after-CJ}(2), we can gather the corresponding sheaves appearing in the Chen-Jiang decomposition of ${a_X}_*\omega_X$ 
yielding another  canonical decomposition 
\begin{equation}\label{decomp}{a_X}_*\omega_X=\bigoplus_{j=1}^{r_X}(a_{f_X}^*\H_{X,j})\otimes P_{X, \delta_j}
\end{equation}
defined by the following properties: \\
\emph{ the $\H_{X,j}$'s are GV-sheaves  on $\Alb Z_X$  \emph{(in fact the direct sum of some pullbacks of $M$-regular sheaves from quotient abelian varieties appearing in the Chen-Jiang decomposition of ${a_{X}}_*\omega_X$)}, the $\delta_j$ are torsion points of $\Pic0 X$ and $r_X$ is the number of translates  in $\Pic0 X$ of the abelian subvariety $\widehat
{a_{f_X}}(\Pic0 Z_X)$    containing at least one component of the locus $V^0(\Alb X,{a_X}_*\omega_X)$. }\\
The same sort of decomposition holds for ${a_Y}_*\omega_Y$:
$${a_Y}_*\omega_Y=\bigoplus_{k=1}^{r_Y}(a_{f_Y}^*\H_{Y,k})\otimes P_{Y, \gamma_k}.$$
We claim that 
 \[r_X=r_Y:=r\]
 and, up to reordering, for all $j=1,\dots ,r$
\[\overline\varphi\Bigl(V^0(X,\omega_X)\cap(\widehat{a_{f_X}}(\Pic0 Z_X)-\delta_j)\Bigr)=V^0(Y,\omega_Y)\cap (\widehat{a_{f_Y}}(\Pic0 Z_Y)-\gamma_j).\]
In fact each component of $V^0(X,\omega_X)$ (which is a translate of an abelian subvariety $\widehat{a_{f_X}}(\Pic0 Z_X)$) is contained in a unique translate of $\widehat{a_{f_X}}(\Pic0 Z_X)$. The same happens on $Y$. From (\ref{picz}) and Lombardi's theorem  (\ref{lomb3})  it follows that the algebraic group isomorphism $\overline\varphi$ sends such a translate of $\widehat{a_{f_X}}(\Pic0 Z_X)$ to the corresponding translate (in $\Pic0 Y$) of $\widehat{a_{f_Y}}(\Pic0 Z_Y)$. This proves what claimed.

 In fact, since two different translates have empty intersection, we have that:  \\
(*) \emph{  for $i\ge 0$ and for a fixed $\alpha\in\Pic0X$,
 in the decomposition
$$H^i({a_X}_*\omega_X\otimes P_\alpha) =\bigoplus_{j=1}^{r_X}H^i((a_{f_X}^*\H_{X,j})\otimes P_{\delta_j+\alpha})$$
at most  one summand is non-zero. } \\
For $i=0$ this holds by definition of the above decomposition, and for $i>0$ it follows as above from Proposition \ref{non-vanish} (a). 
Moreover, from projection formula and the fact that the quotient $\Alb X\rightarrow \Alb Z_X$ has connected fibres it follows that 
 \begin{equation}\label{projection-f-0}H^0(\Alb X, (a_{f_X}^*\H_{X,j})\otimes P_{\delta_j+\alpha})\\
 \\=\begin{cases}H^0(\Alb Z_X,\H_{X,j}\otimes P_\eta)\>\>\>\hbox{if $\delta_j + \alpha=\widehat{a_{f_X}}(\eta)$ with $\eta\in\Pic0 Z_X$}\\0\>\>\>\hbox{otherwise}\end{cases}
 \end{equation}
 The same holds for $Y$. This, combined with  (\ref{lomb-formula}) proves:
 
 \vskip0.3truecm\noindent\textbf{Step 3.} \emph{Keeping the above notation  let $\alpha\in V^0(\Alb X,{a_X}_*\omega_X)$ and $\eta\in \Pic0 Z_X$  such that $\widehat{a_{f_X}}(\eta)=\alpha+\delta_j$. Then
 \[\Phi_{\alpha+\delta_j}^0\, H^0(\Alb Z_X,\H_{X,j}\otimes P_\eta)= H^0(\Alb Z_Y,\H_{Y,j}\otimes P_{\overline\varphi(\eta)})\]
 where, via a slight abuse of language, we are denoting $\overline\varphi(\eta)\in\Pic0 Z_Y$ the element $\nu\in \Pic0 Z_Y$ such that, by (\ref{picz}), $\widehat{a_{f_Y}}(\nu)=\overline \varphi(\widehat{a_{f_X}}(\eta))$.}

\vskip0.3truecm Next, we recall that for all $\alpha\in\Pic0 X$ the local to global spectral sequence computing each graded component $HH_i(X,\alpha)$ degenerates (\cite{swan} Cor. 2.6). It follows that the canonical map from  $H^i(X,\omega_X\otimes P_\alpha)$ to
 $HH_i(X,\alpha)$ is an embedding. 
 %In fact 
%$H^*(X,\alpha):=\oplus H^i(X,\omega_X\otimes P_\alpha)$ is a graded submodule of the twisted Hochschild cohomology module $HH^*(X,\alpha)$. 
More, for $\alpha\in V^0(\Alb X,\,{a_X}_*\omega_X)$ and $\eta\in \Pic0 Z_X$  such that $\widehat{a_{f_X}}(\eta)=\alpha+\delta_j$, we have the following chain of canonical  embeddings of vector spaces
\begin{equation}\label{inclusion1}
H^i(\Alb Z_X,\H_{X,j}\otimes P_\eta)\hookrightarrow H^i(\Alb X, \,{a_X}_*\omega_X\otimes P_\alpha)\hookrightarrow H^i(X,\omega_X\otimes P_\alpha)\hookrightarrow HH_i(X,\alpha)
\end{equation}
(and the same things holds for $Y$).\footnote{In the second space $P_\alpha$ denotes a line bundle on $\Alb X$, while in the third space $P_\alpha$ denotes a line bundle on $X$, i.e., strictly speaking, the pullback, via the Albanese map, of the previous $P_\alpha$.} The first inclusion follows from (\ref{decomp}) via projection formula, and the second one follows 
from Koll\'ar's theorem on the degeneration of the Leray spectral sequence of the canonical bundle (\cite{kollar2}), once again combined with projection formula.

\noindent\textbf{Step 4.}  \emph{ Let $\alpha\in V^0(\Alb X,{a_X}_*\omega_X)$ and $\eta\in \Pic0 Z_X$  such that $\widehat{a_{f_X}}(\eta)=\alpha+\delta_j$. Then, for all $i\ge 0$},
 \[\Phi_{\alpha+\delta_j}^i\, H^i(\Alb Z_X,\H_{X,j}\otimes P_\eta)= H^i(\Alb Z_Y,\H_{Y,j}\otimes P_{\overline\varphi(\eta)}).\]
 
\proof It is here where we use 
 the multiplicative structure and the BGG correspondence. We have the following morphisms of graded algebras
\[\Lambda^*H^1(Z_X, \OO_{Z_X})\hookrightarrow  \Lambda^*H^1(X, \OO_X)\rightarrow H^*(X, \OO_X)\hookrightarrow HH^*(X)\]
Therefore the Hochschild cohomology $HH_*(X,\alpha)$ is  a graded module also on all the graded algebras appearing above. The similar thing holds for $HH_*(Y,\beta)$. 
From Step 2 it follows that:

\noindent (**) \emph{ the  graded module isomorphism $\Phi_\alpha: HH_*(X,\alpha)\buildrel\sim\over\rightarrow HH_*(Y,\overline\varphi(\alpha))$ of Theorem \ref{lomb2} is compatible with the isomorphism  $\wedge^*(\Phi^1): \Lambda^*H^1(\OO_{Z_X})\buildrel\sim\over\rightarrow \Lambda^*H^1(\OO_{Z_Y})$.}

\noindent The inclusions (\ref{inclusion1}) fit into inclusions of graded modules over the exterior algebra 
$\Lambda^*H^1(\OO_{Z_X})$
\[H^*(\Alb {Z_X}, \H_{X,j}\otimes P_\eta)\hookrightarrow H^*(\Alb X, \,{a_X}_*\omega_X\otimes P_\alpha)\hookrightarrow H^*(X,\omega_X\otimes P_\alpha)\hookrightarrow HH_*(X,\alpha).\]
 For $\alpha\in V^0(\Alb X,{a_X}_*\omega_X)$ and $\eta\in \Pic0 Z_X$  such that $\widehat{a_{f_X}}(\eta)=\alpha+\delta_j$ let us denote 
  \[\widetilde H^*(\Alb Z_X,\H_{X,j}\otimes P_\eta)\]
    the graded  $\Lambda^*H^1(\OO_{Z_X})$-submodule of $HH_*(X,\alpha)$ generated by $H^0(\Alb {Z_X}, \H_{X,j}\otimes P_\eta)$. Clearly 
  \begin{equation}\label{modules}\widetilde H^*(\Alb Z_X,\H_{X,j}\otimes P_\eta)\subseteq H^*(\Alb Z_X, \H_{X,j}\otimes P_\eta). 
  \end{equation}
  (in fact the first is a submodule of the second).
    By Step 3 and (**) it follows that
  \begin{equation}\label{gen}\Phi_{\delta_j+\alpha} \widetilde H^*(\Alb Z_X,\H_{X,j}\otimes P_\eta)=\widetilde H^*(\Alb Z_Y,\H_{Y,j}\otimes P_{\overline\varphi(\eta)}).
  \end{equation}
  By projection formula on the decomposition (\ref{decomp}) it follows that the sheaf $\H_{X,j}\otimes P_\eta$ is a direct summand of the sheaf ${a_{f_X}}_*({a_X}_*\omega_X\otimes P_{\delta_j}^\vee)\otimes P_\eta$. Therefore the module 
  $H^*(\Alb Z_X, \H_{X,j}\otimes P_\eta)$ is a direct summand of the module $H^*(\Alb Z_X, {a_{f_X}}_*({a_X}_*\omega_X\otimes P_{\delta_j}^\vee)\otimes P_\eta)$, which is $0$-regular by  Theorem \ref{0reg}, hence, in particular, generated in degree $0$. Therefore the module  $H^*(\Alb Z_X, \H_{X,j}\otimes P_\eta)$ is generated in degree $0$ as well, and we have equality in (\ref{modules}).
 By the same reason the same thing happens for $Y$. Therefore Step 4 follows from (\ref{gen}). 
 \endproof

\vskip0.3truecm\noindent\textbf{Step 5.} \emph{Conclusion of the proof. } 
 Let $q=\dim \Alb X=\dim \Alb Y$ (Theorem of Popa-Schnell, \cite{ps1}), and let $q^\prime=\dim \Alb Z_X=\dim \Alb Z_Y$  (Step 2). Note that, since the quotient map $a_{f_X}$ has connected fibres, $R^k{a_{f_X}}_*\OO_{\Alb X}$ is a trivial bundle of rank ${{q-q^\prime}\choose{k}}$. Therefore for $\eta\in \Pic0 Z_X$  such that $\widehat{a_{f_X}}(\eta)=\alpha+\delta_j$
 we have   that
\begin{eqnarray*}h^i(\Alb X, {a_X}_*\omega_X\otimes P_{\alpha})&=&\bigoplus_{k=0}^{q-q^\prime} h^{i-k}(\Alb Z_X, R^k{a_{f_X}}_*({a_X}_*\omega_X)\otimes P_{\eta}))\\
&=&\bigoplus_{k=0}^{q-q^\prime} h^{i-k}(\Alb Z_X,\H_{X,j}\otimes P_\eta)^{\oplus {{q-q^\prime}\choose k}} 
\end{eqnarray*} 
where the first equality is the Koll\'ar decomposition (plus projection formula) with respect to the morphism $a_{f_X}$ applied to the sheaf ${a_X}_*\omega_X$ (\cite{kollar2} Th. 3.4) and the second equality follows from (*) and projection formula. 
The same formula holds for $Y$. Therefore Theorem \ref{main} follows from Step 4 applied to the last quantity. 

\section{Application to irregular fibrations: Theorem \ref{fibrations}}

\noindent\textbf{Fibrations: terminology. }
 Let $X$ be a variety. A \emph{fibration of X} is an algebraic fiber space $g: X\rightarrow S$ where  $S$ is a normal variety, called base of the fibration. If a non-singular model of $S$ (hence all of them) has maximal Albanese dimension such a fibration is said to be \emph{irregular}.\\
 A \emph{non-singular representative of a   fibration of $X$} is a fibration $g^\prime: X^\prime \rightarrow S^\prime$ with both $X^\prime$ and $S^\prime$ smooth, equipped with birational morphisms $p:X^\prime\rightarrow X$ and $q: S^\prime\rightarrow S$ such that $g\circ p=q\circ g^\prime$.\\
  Two fibrations of $X$ are  \emph{equivalent} if there is a fibration $X^\prime\rightarrow S^\prime$ which is a birational representative for both of them. \\
Let $g$ be a fibration of $X$.  We denote  $\Pic0 (g)$ the kernel of the restriction map from $\Pic0 X$ to $\Pic0$ of a general fibre. Notice that if $g^\prime$ is any non-singular representative of $g$ then $\Pic0(g)=\Pic0(g^\prime)$, therefore $\Pic0 (g)$ depends only on the equivalence class of $g$. $\Pic0 (g)$ is an extension of $g^*\Pic0 S$ by a finite subgroup $\Gamma$ of $\Pic0 X / g^*\Pic0 S$ (see e.g. \cite{standard}), hence it disconnected unless $\Gamma=\hat 0$.

\begin{definition} Let $g:X\rightarrow S$ be an irregular fibration of $X$ and let us denote $i=\dim X-\dim S$.

\noindent
 (a) $g$ is \emph{cohomologically non-detectable} if  $S$  birational to an abelian variety and $\Pic0 (g)$ is connected, and \emph{cohomologically detectable} otherwise. 
 
\noindent (b) $g$ is \emph{weakly-$\chi$-positive} if there is a point $\alpha\in\Pic0 X$ such that for a non-singular representative $g^\prime:X^\prime \rightarrow S^\prime$ (hence for all of them, see Remark \ref{previous} below)  
 \begin{equation}\label{chi}\chi(R^ig^\prime_*(\omega_{X^\prime}\otimes P_\alpha))>0.
 \end{equation}
  Note that an $\alpha\in\Pic0 X$ as in the definition must belong to $\Pic0 (g)$. Therefore 
 one can always assume that $\alpha$ is a torsion point.
 
\noindent  (c) $g$ is \emph{$\chi$-positive} if for  a non-singular representative $g^\prime$ as above (hence for all of them) $\chi(\omega_{S^\prime})>0$. 
\end{definition}
 Note that since $\omega_{S^\prime}=R^ig^\prime_*\omega_{X^\prime}$ (\cite{kollar1}, 
   Prop. 7.6) a $\chi$-positive irregular fibration is weakly-$\chi$-positive.

\begin{remark}\label{previous} We keep the notation of the above Definitions. From  Hacon's generic vanishing  (see Remark \ref{after-CJ}(1)) and an \'etale covering trick it follows  that $R^i(a_{S^\prime}\circ g^\prime)_*(\omega_{X^\prime}\otimes P_\alpha)$ is a GV-sheaf on $\Alb S^\prime$. On the other hand, since $a_{S^\prime}$ is generically finite, by the combination of Koll\'ar's vanishing and decomposition (\cite{kollar2} Thm 3.4), $R^k{a_{S^\prime}}_*R^h g^\prime_*(\omega_{X^\prime}\otimes P_\alpha)=0$ for all $k>0$ and $h\ge 0$, hence $R^i(a_{S^\prime}\circ g^\prime)_*(\omega_{X^\prime}\otimes P_\alpha)={a_{S^\prime}}_*R^ig^\prime_*(\omega_{X^\prime}\otimes P_\alpha)$. Therefore, having in mind Remark \ref{easy}, the condition  (\ref{chi}) is equivalent to the condition 
$$V^0(S^\prime, R^ig^\prime_*(\omega_{X^\prime}\otimes P_\alpha))=\Pic0 S^\prime.$$ 
This in turn implies that the condition (\ref{chi}) does not depend on the non-singular representative.
\end{remark}

\vskip0.3truecm\noindent\textbf{Preliminaries: the linearity theorem of Green and Lazarsfeld. } The relation between the loci $V^i(X, \omega_X)$ and irregular fibrations follows from the following fundamental theorem of Green and Lazarsfeld, with an addition of Simpson
\begin{theorem}[\cite{gl2},  \cite{simpson}]\label{gl2} Every irreducible component  $W$ of the loci   $V^i(X,\omega_X)$ is a \emph{linear subvariety} i.e. a translate of  an abelian subvariety $T\subset \Pic0 X$ by a torsion point. More precisely let
let $\pi: \mathrm{Alb}\, X\rightarrow B:=\Pic0 T$ the dual quotient. This defines the composed map $f:X\rightarrow B$ 
\begin{equation}\label{fundamental1}\xymatrix{X\ar[r]^{a_X}\ar[rd]^f& \mathrm{Alb}\, X\ar[d]^\pi\\
&B\\}
\end{equation} 
Then there is a torsion\footnote{this is due to Simpson} element  $\alpha\in\Pic0X$ such that
\begin{equation}\label{compo}W=\widehat\pi(\Pic0 B)+ \alpha . 
\end{equation} 
Moreover
\begin{equation}\label{fundamental2}
 \dim X-\dim f(X)\ge i.
\end{equation}
\end{theorem}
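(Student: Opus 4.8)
\noindent The statement has three parts: \textbf{(1)} each irreducible component $W$ of $V^i(X,\omega_X)$ is a translate of an abelian subvariety $T\subseteq\Pic0 X$; \textbf{(2)} the translate may be chosen by a torsion point, so that $W=\widehat\pi(\Pic0 B)+\alpha$ with $\alpha$ torsion, $B=\Pic0 T$ and $\pi:\Alb X\to B$ the dual quotient; \textbf{(3)} the composite $f=\pi\circ a_X:X\to B$ satisfies $\dim X-\dim f(X)\ge i$. By Serre duality $h^i(X,\omega_X\otimes P_\alpha)=h^{n-i}(X,P_{-\alpha})$ with $n=\dim X$, so $V^i(X,\omega_X)=-V^{n-i}(X,\OO_X)$, and for \textbf{(1)}--\textbf{(2)} it is equivalent to argue with the jumping loci $V^{n-i}(X,\OO_X)$.

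For \textbf{(1)} I would run the Green--Lazarsfeld derivative-complex argument (\cite{gl2}). Fix $\alpha$ in a component $W$; for $v\in H^1(X,\OO_X)=T_\alpha\Pic0 X$ form the cup-product complex
\[
\cdots\longrightarrow H^{j-1}(X,P_\alpha)\xrightarrow{\,\smile v\,}H^{j}(X,P_\alpha)\xrightarrow{\,\smile v\,}H^{j+1}(X,P_\alpha)\longrightarrow\cdots .
\]
The deformation-theoretic core — Green--Lazarsfeld's control of the \emph{higher} obstructions to deforming a cohomology class — is that, via the exponential $H^1(\OO_X)\to\Pic0 X$, the analytic germ of $V^{n-i}(X,\OO_X)$ at $\alpha$ is identified with the germ at $0$ of the cone of those $v$ for which the complex above fails to be exact in degree $n-i$; equivalently, the formal neighbourhood is cut out precisely by the quadrics coming from the cup-product obstruction maps. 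Since $\bigoplus_jH^j(X,P_\alpha)$ is a graded module over $\Lambda^*H^1(X,\OO_X)$, the non-exactness in a fixed degree is detected by kernels and images of wedge maps, whose loci are linear; Green--Lazarsfeld deduce from this linear structure of the tangent cone, together with the obstruction calculus, that each irreducible component of $V^{n-i}(X,\OO_X)$ is a translate of a subtorus of $\Pic0 X$, which is \textbf{(1)}. This is the step I expect to be the main obstacle: one must master all the obstructions to deforming a cohomology class, not merely the first-order one spanning the tangent space, in order to know that the analytic germ is exactly the predicted cone and hence a union of linear (subtorus) pieces.

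For \textbf{(2)} I would invoke Simpson's theorem (\cite{simpson}): identifying $V^{n-i}(X,\OO_X)$, through the comparison of moduli of rank-one local systems, with the corresponding cohomology jump locus inside the character variety $\Hom(\pi_1(X),\mathbb{C}^*)$, one observes that the latter is a closed algebraic subset defined over $\overline{\mathbb{Q}}$ and stable under $\mathrm{Gal}(\overline{\mathbb{Q}}/\mathbb{Q})$ and under the power maps $\chi\mapsto\chi^m$; by a theorem of M. Laurent on algebraic subgroups of tori, any subvariety of an algebraic torus with these stability properties is a finite union of torsion translates of subtori. Combined with \textbf{(1)} this yields the asserted form $W=\widehat\pi(\Pic0 B)+\alpha$ with $\alpha$ a torsion point.

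For \textbf{(3)}, write a general point of $W$ as $\widehat\pi(\beta)+\alpha$ with $\alpha$ the torsion point just produced and $\beta\in\Pic0 B$ arbitrary, so that $\omega_X\otimes P_{\widehat\pi(\beta)+\alpha}=(\omega_X\otimes P_\alpha)\otimes f^*P_\beta$, and set $e:=\dim X-\dim f(X)$, so $R^kf_*(\omega_X\otimes P_\alpha)=0$ for $k>e$. By Koll\'ar's theorem (\cite{kollar2}) that $Rf_*(\omega_X\otimes P_\alpha)$ splits in the derived category as $\bigoplus_k\bigl(R^kf_*(\omega_X\otimes P_\alpha)\bigr)[-k]$ — using an \'etale cover as in Remark \ref{twist} to reduce the torsion twist to the untwisted canonical bundle — tensoring with $P_\beta$ and taking cohomology on $B$ gives
\[
H^i\bigl(X,\omega_X\otimes P_{\widehat\pi(\beta)+\alpha}\bigr)=\bigoplus_{k=0}^{e}H^{i-k}\bigl(B,\,R^kf_*(\omega_X\otimes P_\alpha)\otimes P_\beta\bigr).
\]
Each $R^kf_*(\omega_X\otimes P_\alpha)$ is a GV-sheaf on the abelian variety $B$ by Hacon's theorem (\cite{hacon}; Remark \ref{after-CJ}(1) together with Remark \ref{twist}), hence $\mathrm{codim}_{\Pic0 B}V^{i-k}\bigl(B,R^kf_*(\omega_X\otimes P_\alpha)\bigr)\ge i-k$. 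If $i>e$, then for every $k$ with $0\le k\le e$ the index $i-k$ is positive, so each locus $V^{i-k}(B,R^kf_*(\omega_X\otimes P_\alpha))$ is a proper closed subset of $\Pic0 B$; their finite union then misses a general $\beta$, for which the right-hand side vanishes, contradicting $W\subseteq V^i(X,\omega_X)$. Therefore $i\le e=\dim X-\dim f(X)$, which is \textbf{(3)}.
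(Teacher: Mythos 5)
The paper does not prove this statement at all: Theorem \ref{gl2} is quoted as background from \cite{gl2} and \cite{simpson}, so there is no internal proof to compare against. Measured against that, your proposal is more than adequate. Your part \textbf{(3)} is a genuine, complete and correct argument: the identification $P_{\widehat\pi(\beta)+\alpha}=P_\alpha\otimes f^*P_\beta$, the Koll\'ar splitting of $Rf_*(\omega_X\otimes P_\alpha)$ (via the \'etale-cover reduction for the torsion twist), the vanishing $R^kf_*(\omega_X\otimes P_\alpha)=0$ for $k>e$, and Hacon's generic vanishing forcing each $V^{i-k}(B,R^kf_*(\omega_X\otimes P_\alpha))$ to be a proper closed subset when $i>e$, together give exactly the contradiction you want for a general $\beta$; this is precisely the style of argument the paper itself runs in \S 4 (compare (\ref{union}) and Remark \ref{after-CJ}). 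Parts \textbf{(1)} and \textbf{(2)} are, unavoidably, citation-level sketches of Green--Lazarsfeld and Simpson, which is the same level of detail the paper offers.

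One caveat on your account of \textbf{(2)}: the cohomology jump loci are \emph{not} stable under the power maps $\chi\mapsto\chi^m$ (a torsion translate $\epsilon T$ goes to $\epsilon^m T$, which need not lie in the locus), so the route ``power-map stability plus Laurent's theorem'' is not how Simpson's proof goes. His argument instead exploits that the loci are algebraic and defined over $\overline{\mathbb{Q}}$ simultaneously in the Betti (character variety) and Dolbeault ($\Pic0 X\times H^0(\Omega^1_X)$) realizations, and that a positive-dimensional translate of a subtorus compatible with both $\overline{\mathbb{Q}}$-structures must be a torsion translate --- via Gelfond--Schneider or a Galois-orbit/density argument on torsion points. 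Since you are citing Simpson for the statement rather than relying on this mechanism elsewhere, this mis-description does not propagate, but it should be corrected if you intend the sketch to stand as a proof outline.
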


Taking the Stein factorization of the map $f$ one gets  a fibration $g:X\rightarrow S$, where $S$ is a normal projective variety of maximal Albanese dimension, and a finite morphism $a:S\rightarrow B$ such that $a\circ g=f$. Therefore, in our terminology, $g$ is an irregular fibration of $X$. We will refer to it  as \emph{the fibration of $X$ induced by the component $W$ of $V^i(X,\omega_X)$}, or also
 \emph{the fibration of $X$ induced by the abelian subvariety  $T$ of $\Pic0X$ parallel to the component $W$}.
In \cite{standard}, Lemma 5.1 it is shown in particular the following
\begin{proposition}\label{albanese} The above abelian variety $B$ is the Albanese variety of any non singular model of $S^\prime$ of $S$ and the morphism $a$, composed with the desingularization $S^\prime\rightarrow S$ is  an  Albanese morphism of $S^\prime$. In particular  \ $W=\widehat\pi(\Pic0 S^\prime)+\alpha$.
\end{proposition}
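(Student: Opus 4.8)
The plan has three parts: pass to a non-singular representative, build comparison homomorphisms between Albanese varieties, and then use that $W$ is a \emph{component} of $V^i(X,\omega_X)$ to pin down $\Alb S'$. Write $\alpha$ for the (torsion, by Simpson) point with $W=T+\alpha$, where $T=\widehat\pi(\Pic0 B)$ is the abelian subvariety parallel to $W$, so that $\Pic0 B\cong T$ canonically. First I would choose birational morphisms $p\colon X''\to X$ and $q\colon S'\to S$ with $X''$, $S'$ smooth and a fibration $g'\colon X''\to S'$ satisfying $q\circ g'=g\circ p$. Since $a$ is finite, $a\circ q\colon S'\to B$ is generically finite; as it factors through the Albanese morphism $a_{S'}$ of $S'$, this already forces $a_{S'}$ to be generically finite onto its image, i.e.\ $S'$ has maximal Albanese dimension.

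By the universal property of the Albanese, $a\circ q$ factors (up to translation) through a homomorphism $\psi\colon\Alb S'\to B$, and $a_{S'}\circ g'\colon X''\to\Alb S'$ factors through $\Alb X''=\Alb X$, giving $\phi\colon\Alb X\to\Alb S'$; $\phi$ is surjective because $g'$ has connected fibres, and $a\circ q\circ g'=f\circ p=\pi\circ a_X\circ p$ (by \eqref{fundamental1}) gives $\psi\circ\phi=\pi$ up to translation, so $\psi$ is surjective as well. Everything then reduces to showing $\psi$ is an isomorphism. Now $g'_*\OO_{X''}=\OO_{S'}$, so ${g'}^{*}\colon\Pic0 S'\to\Pic0 X$ is injective, and ${g'}^{*}\widehat\psi(\Pic0 B)=\widehat\pi(\Pic0 B)=T$ (both sides equal $(a\circ q\circ g')^{*}\Pic0 B$); hence $\psi$ is an isomorphism iff ${g'}^{*}\Pic0 S'=T$, equivalently $\widehat\psi(\Pic0 B)=\Pic0 S'$. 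Since the inclusion $T\subseteq{g'}^{*}\Pic0 S'$ is immediate, the content is the reverse one.

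Suppose ${g'}^{*}\Pic0 S'\supsetneq T$. Koll\'ar's decomposition applied to $g'$ (legitimate after the \'etale trick of Remark \ref{twist}) gives, for $\mu\in\Pic0 S'$,
\[
h^i\bigl(X'',\omega_{X''}\otimes{g'}^{*}P_\mu\otimes P_\alpha\bigr)=\sum_k h^{i-k}\bigl(S',R^kg'_*(\omega_{X''}\otimes P_\alpha)\otimes P_\mu\bigr).
\]
Pushed to $\Alb S'$ — losing no cohomology, since $a_{S'}$ is generically finite (cf.\ Remark \ref{previous}) — each $R^kg'_*(\omega_{X''}\otimes P_\alpha)$ is a GV-sheaf with a Chen--Jiang decomposition (Theorem \ref{CJ}, Remark \ref{twist}): a direct sum of sheaves $\rho^{*}\mathcal{G}\otimes P_\epsilon$ with $\rho\colon\Alb S'\to C$ a quotient and $\mathcal{G}$ an $M$-regular sheaf on $C$, whose cohomological support loci are torsion translates of $\widehat\rho(\Pic0 C)$, and \emph{full} such translates in the degrees where the $M$-regular summand genuinely contributes (Proposition \ref{non-vanish}(c) together with the projection-formula computation of Remark \ref{after-CJ}). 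Since $h^i(X,\omega_X\otimes P_\beta)>0$ for \emph{every} $\beta\in W$, hence for every $\mu\in\widehat\psi(\Pic0 B)$ (there ${g'}^{*}\mu\in T$), splitting the displayed sum over the finitely many $k$ and using irreducibility of $\widehat\psi(\Pic0 B)$ yields a $k_0$ and a Chen--Jiang summand $\rho^{*}\mathcal{G}\otimes P_\epsilon$ of $R^{k_0}g'_*(\omega_{X''}\otimes P_\alpha)$ whose $V^{i-k_0}$-locus — once one checks the relevant cohomological degrees lie in the ``full'' range — is a translate of $\widehat\rho(\Pic0 C)\supseteq\widehat\psi(\Pic0 B)$. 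Pulling this back by ${g'}^{*}$ and translating by $\alpha$ produces an irreducible closed subset $L\subseteq V^i(X,\omega_X)$ containing the component $W$, so $L=W$ and $\widehat\rho(\Pic0 C)=\widehat\psi(\Pic0 B)$: a \emph{proper} quotient $C$ of $\Alb S'$ carries an $M$-regular summand of $R^{k_0}g'_*(\omega_{X''}\otimes P_\alpha)$ accounting for all of $W$. The main obstacle is to rule this out; this is exactly where one must use that $S'$ has maximal Albanese dimension — it controls which proper quotients of $\Alb S'$ can host the $M$-regular pieces of the sheaves $R^kg'_*(\omega_{X''}\otimes P_\alpha)$ — and, together with the cohomological-degree bookkeeping flagged above, it is the genuinely delicate input: this is the content of \cite[Lemma~5.1]{standard}. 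Granting it, $\widehat\psi(\Pic0 B)=\Pic0 S'$, so $\psi$ is an isomorphism, $B=\Alb S'$, the morphism $a$ composed with $q\colon S'\to S$ is an Albanese morphism of $S'$, and the identity $W=\widehat\pi(\Pic0 S')+\alpha$ is then just the rewriting of $W=\widehat\pi(\Pic0 B)+\alpha$.
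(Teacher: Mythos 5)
You should first be aware that the paper does not prove this proposition at all: it is quoted from \cite{standard}, Lemma 5.1 (``In \cite{standard}, Lemma 5.1 it is shown in particular the following''), so there is no internal proof to compare your argument against. Your setup is sound as far as it goes: the homomorphisms $\phi\colon\Alb X\to\Alb S'$ and $\psi\colon\Alb S'\to B$ with $\psi\circ\phi=\pi$ up to translation, the observation that $S'$ has maximal Albanese dimension because $a\circ q$ is generically finite, and the reduction of the whole statement to surjectivity of $\widehat\psi$ (its injectivity being forced by the identity ${g'}^{*}\circ\widehat\psi=\widehat\pi$ together with injectivity of $\widehat\pi$ and of ${g'}^{*}$) are all correct, and the Koll\'ar/Chen--Jiang analysis of what a failure of surjectivity would entail is in the spirit of \cite{standard}.

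The gap is that the argument is circular exactly at the decisive point. Having shown that $\widehat\psi(\Pic0 B)\subsetneq\Pic0 S'$ would produce a proper quotient $\rho\colon\Alb S'\to C$ with $\widehat\rho(\Pic0 C)=\widehat\psi(\Pic0 B)$ carrying an $M$-regular summand of $R^{k_0}g'_*(\omega_{X''}\otimes P_\alpha)$ accounting for all of $W$, you give no argument for why this configuration is impossible; you declare it ``the content of \cite[Lemma~5.1]{standard}'' and grant it. But \cite{standard}, Lemma 5.1 \emph{is} the proposition being proved, so nothing has been established beyond a (correct) reformulation: the entire mathematical content --- how the component property of $W$, the maximal Albanese dimension of $S'$ and the Green--Lazarsfeld inequality $\dim X-\dim f(X)\ge i$ conspire to force $q(S')=\dim B$ --- is precisely the step you omit. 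Moreover the ``full range'' condition $i-k_0\le\dim\Alb S'-\dim C$ that you flag but do not verify is not minor bookkeeping: without it the $V^{i-k_0}$-locus of the Chen--Jiang summand is a proper closed subset of the translate of $\widehat\rho(\Pic0 C)$, and the identification $L=W$ would have to be rerun on a smaller stratum before you could even reach the configuration you propose to rule out. As written, the proposal is a correct reduction followed by a citation of the result itself, not a proof.
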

In conclusion, for a non-singular representative $g^\prime:X^\prime\rightarrow S^\prime$ of the induced fibration we have the commutative diagram
\begin{equation}\label{diagram}\xymatrix{X^\prime\ar@/^0.7pc/[rr]^{a_{X^\prime}}\ar[d]^{g^\prime}\ar[r]&X\ar[d]^g\ar[r]_{a_X}\ar[rd]^f&\Alb X\ar[d]^\pi\\
S^\prime\ar[r]\ar@/_0.7pc/[rr]_{a_{S^\prime}}&S\ar[r]^a&\Alb S^\prime}
\end{equation}

\noindent\textbf{Preliminaries: standard components and (weakly)-$\mathbf{\chi}$-positive irregular fibrations. }  We will suppose henceforth that \emph{$X$ has maximal Albanese dimension. }An irreducible component $W$ of $V^i(X,\omega_X)$ is said to be \emph{standard} (see \cite{standard}) if there is equality in (\ref{fundamental2}), i.e. 
\[\dim X-\dim S=i.\]
  The relation between standard components and their induced fibrations is almost canonical. This is the content of the following Lemma, inspired by Theorem 16 of \cite{lombardi-fibrations}. In the statement we consider the following sets:\\
  -  $\mathcal A(X)$ denotes \emph{the set of abelian subvarieties $T$ of the abelian variety $\Pic0 X$ such that some of their translates is a standard component of $V^i(X,\omega_X)$ for some index $i$} (clearly this can happen  for only one index $i$, denoted $i(T)$). \\
  - $\mathcal G(X)$ denotes \emph{the set of equivalence classes  of weakly-$\chi$-positive irregular fibrations of X. }\\
   \begin{lemma}\label{bijection} The function $\sigma:\mathcal A(X)\rightarrow \mathcal G(X)$ taking an abelian subvariety to the class of its induced fibration \emph{(see the above paragraph}) is a bijection. Moreover\\
  (1) $\sigma$ takes  those abelian subvarieties which are themselves (standard) 
components of $V^i(X,\omega_X)$ to the equivalence classes of $\chi$-positive fibrations .\\
(2)  the base-dimension of $\sigma(T)$ is $\le \dim T$.
\end{lemma}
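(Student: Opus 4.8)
The plan is to show that $\sigma$ is well defined, injective and surjective, and to read off (1) and (2) along the way. The single technical engine is the Koll\'ar decomposition (\cite{kollar2}) of $\omega_{X'}\otimes P_\alpha$ along the composite map $X'\xrightarrow{g'}S'\xrightarrow{a_{S'}}B$, where $g'\colon X'\to S'$ is a non-singular representative of the fibration in play and $B=\Alb S'$, combined with two facts already available: Remark \ref{previous}, which gives $R^{j}{a_{S'}}_*R^kg'_*(\omega_{X'}\otimes P_\alpha)=0$ for $j>0$ and says that $\chi(R^ig'_*(\omega_{X'}\otimes P_\alpha))>0$ is \emph{equivalent} to $V^0(S',R^ig'_*(\omega_{X'}\otimes P_\alpha))=\Pic0 S'$; and the linearity theorem \ref{gl2} together with Proposition \ref{albanese}, which lets one pass between a linear subvariety $\widehat\pi(\Pic0 B)+\alpha$ of $\Pic0 X$ and its induced fibration $g\colon X\to S$ with $\Alb S'\cong B$. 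Throughout, all translating points are torsion (by Simpson's part of Theorem \ref{gl2}, resp. by the remark following the definition of weakly-$\chi$-positive), so the \'etale covering trick makes Koll\'ar vanishing and the Chen--Jiang machinery of \S2 applicable to the $P_\alpha$-twists.

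\emph{Well-definedness and (1).} That the induced fibration depends only on $T$, and not on the chosen standard translate $W$, is immediate: it is the Stein factorization of $\pi\circ a_X\colon X\to\Pic0 T$, which depends only on $T$, and it is irregular by Proposition \ref{albanese}. To see it is weakly-$\chi$-positive, write $W=\widehat\pi(\Pic0 B)+\alpha$. Standardness means $R^kg'_*=0$ for $k>i:=\dim X-\dim S$ (i.e. $i$ is the relative dimension of $g'$), so Koll\'ar's decomposition together with the vanishing of the $R^{j}{a_{S'}}_*$, $j>0$, gives for every $\beta\in\Pic0 B$
\[
h^i(X,\omega_X\otimes P_{\alpha+\widehat\pi(\beta)})=\bigoplus_{k=0}^{i}h^{i-k}\bigl(S',R^kg'_*(\omega_{X'}\otimes P_\alpha)\otimes a_{S'}^{*}P_\beta\bigr).
\]
For $k<i$ the $k$-th summand is an $(i-k)$-th cohomology group, with $i-k\ge 1$, of the GV-sheaf ${a_{S'}}_*R^kg'_*(\omega_{X'}\otimes P_\alpha)$ on $B$, hence vanishes for general $\beta$; since $W\subseteq V^i(X,\omega_X)$ is closed, the $k=i$ summand $h^0(B,{a_{S'}}_*R^ig'_*(\omega_{X'}\otimes P_\alpha)\otimes P_\beta)$ must then be positive for \emph{all} $\beta$, i.e. $V^0(S',R^ig'_*(\omega_{X'}\otimes P_\alpha))=\Pic0 S'$, so $\chi(R^ig'_*(\omega_{X'}\otimes P_\alpha))>0$ by Remark \ref{previous} (Euler characteristic being preserved by ${a_{S'}}_*$ here, again because its higher direct images vanish). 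Thus $\sigma(T)\in\mathcal G(X)$. Statement (1) is the special case $\alpha=0$: then $R^ig'_*\omega_{X'}=\omega_{S'}$ (\cite{kollar1}), and the same computation gives $V^0(S',\omega_{S'})=\Pic0 S'$, i.e. $\chi(\omega_{S'})>0$, so $\sigma(T)$ is $\chi$-positive.

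\emph{Injectivity, surjectivity and (2).} If $\sigma(T_1)=\sigma(T_2)$ then the induced fibrations admit a common non-singular representative $X'\to S'$, hence share $\Alb S'$ and the induced homomorphism $\Alb X\to\Alb S'$ (this is forced by $X'\to S'\to\Alb S'$ via the universal property of the Albanese, the image of $a_{X'}$ generating $\Alb X$); dualizing, $T_1=T_2$. For surjectivity, start from a weakly-$\chi$-positive irregular fibration $g\colon X\to S$ with torsion witness $\alpha$; replacing $g$ by an equivalent fibration we may assume the morphism $S\to\Alb S'$ is finite, so $g$ is the Stein factorization of $f=\pi\circ a_X$ with $B=\Alb S'$. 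Running the identity above in reverse, the $k=i$ term (with $i=\dim X-\dim S$) shows $h^i(X,\omega_X\otimes P_{\alpha+\widehat\pi(\beta)})\ge h^0(B,{a_{S'}}_*R^ig'_*(\omega_{X'}\otimes P_\alpha)\otimes P_\beta)>0$ for all $\beta$, so $\widehat\pi(\Pic0 B)+\alpha\subseteq V^i(X,\omega_X)$. Let $W$ be an irreducible component containing it; by Theorem \ref{gl2} and Proposition \ref{albanese} it induces a fibration $g_W$ with $T_W:=\widehat{\pi_W}(\Pic0 B_W)\supseteq\widehat\pi(\Pic0 B)$, so (dualizing this inclusion) $\pi$ factors through $\pi_W$, $g_W$ refines $g$, and $\dim X-\dim S_W\le\dim X-\dim S=i$, while $\dim X-\dim S_W\ge i$ by (\ref{fundamental2}). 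Hence $\dim S_W=\dim S$, and a refinement of $g$ with the same base dimension has the same general fibre, hence is equivalent to $g$; so $T_W\in\mathcal A(X)$ with $\sigma(T_W)=[g]$. Finally (2) is immediate: the base of $\sigma(T)$ has dimension $\dim f(X)\le\dim B=\dim\Pic0 T=\dim T$.

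\emph{Main obstacle.} The crux is the equivalence ``$W$ standard $\Longleftrightarrow$ the induced fibration is weakly-$\chi$-positive'', which underlies both well-definedness and surjectivity; everything comes down to the Koll\'ar-decomposition identity above and to the point that the \emph{top} direct image $R^ig'_*(\omega_{X'}\otimes P_\alpha)$ is the one responsible for $W$ being a \emph{full} component, which forces its $V^0$-locus to be all of $\Pic0 S'$. The surrounding bookkeeping — compatibility of the dual quotients $\pi_W$ and $\pi$, the identification $B\cong\Alb S'$, injectivity of the relevant pullbacks on $\Pic0$, and comparison of Stein factorizations across equivalent fibrations, including the harmless reduction to the case where $S\to\Alb S'$ is finite — is routine and is largely carried out in \cite{standard}, but has to be handled with some care.
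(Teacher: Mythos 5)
Your proof is correct and follows essentially the same route as the paper: the key equivalence ``standard component $\Leftrightarrow$ weakly-$\chi$-positive induced fibration'' is established via the Koll\'ar decomposition along $g'$ combined with Hacon's generic vanishing (your general-$\beta$ cohomological formulation is just a rephrasing of the paper's union-of-loci formula (\ref{union}) with its codimension count), and (1), (2) are read off exactly as in the paper. You are in fact somewhat more careful than the paper on the bijectivity bookkeeping (identifying the component containing $\widehat\pi(\Pic0 B)+\alpha$ and comparing Stein factorizations), which the paper compresses into ``the two constructions are inverse to each other''; the only blemish is the misattributed closedness in the sentence deducing $V^0(S',R^ig'_*(\omega_{X'}\otimes P_\alpha))=\Pic0 S'$ — the relevant fact is that this $V^0$ is closed and contains a general $\beta$, not that $W$ is closed.
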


\proof First we need to prove that if $T\in\mathcal A(X)$ then its induced fibration $g:X\rightarrow S$ is weakly-$\chi$-positive. Let $i=i(T)$, and let $W$ be a component verifying (\ref{compo}), with $T=\Pic0 B=\Pic0 S^\prime$ (see Prop. \ref{albanese} and (\ref{diagram}). By definition of standard component, $\dim X-\dim S=i$.
  Thanks to  Koll\'ar vanishing theorem (\cite{kollar1} Theorem 2.1) and decomposition (\cite{kollar2} Theorem 3.1), for a non-singular representative $g^\prime:X^\prime\rightarrow S^\prime$ of the fibration $g$ 
  one has that
\begin{equation}\label{union}V^i(X^\prime, \omega_{X^\prime}\otimes P_{-\alpha})=\bigcup_{j=0}^{i}\widehat\pi(V^{i-j}(S^\prime, R^jg^\prime_*(\omega_{X^\prime}\otimes P_{-\alpha})))
\end{equation}
where $\alpha\in\Pic0 X$ is the torsion point appearing in (\ref{compo}). Again by Hacon generic vanishing theorem (Remark \ref{after-CJ}(1)) and an \'etale covering trick, $\mathrm{codim}_{\Pic0 S^\prime} V^{i-j}(S^\prime, R^jg^\prime_*(\omega_{X^\prime}\otimes P_{-\alpha})\ge {i-j}$. 
Since, using also Prop. \ref{albanese}, the left hand side must contain $\widehat\pi(\Pic0 B)=\widehat\pi(\Pic0 S^\prime)$,  we have that
\begin{equation}\label{V0}V^0(S^\prime, R^{i}g^\prime_*(\omega_{X^\prime}\otimes P_{-\alpha}))=\Pic0 S^\prime
\end{equation}
 i.e., by Remark \ref{previous},
\[\chi(R^{i}g^\prime_*(\omega_{X^\prime}\otimes  P_{-\alpha}))>0.\]
This proves the desired assertion. 
By the same steps in the reverse order one proves that if $g:X\rightarrow S$ is  a weakly-$\chi$-positive irregular fibration  such that $\dim X-\dim S=i$ then (the equivalence class of) $g$  induces standard components $W$ in $V^i(X,\omega_X)$ as follows. Assume that $-\alpha\in\Pic0 (g)$ is such that $\chi(R^ig^\prime_*(\omega_{X^\prime}\otimes a_{X}^\prime P_{-\alpha}))>0$. Then
\[\widehat\pi(\Pic0 S^\prime)+\alpha=\widehat\pi(V^0(S^\prime, R^ig^\prime(\omega_{X^\prime}\otimes P_{-\alpha})))+\alpha\]  is a standard component of $V^i(X,\omega_X)$. 
 It is clear that the two constructions above are inverse to each other. Properties (1) and (2) are clear.
\endproof
\begin{remark}\label{bad}[Cohomologically non-detectable  fibrations.] The above argument with Koll\'ar decomposition proves also that a cohomologically non-detectable irregular fibration $g:X\rightarrow S$ can't be induced by a component $W$ of $V^i(X,\omega_X)$ of dimension $\ge \dim X-i$.
Indeed for such a fibration $V^0(S^\prime,\omega_{S^\prime})=\{\hat 0\}$ because $S$ is birational to an abelian variety. Therefore, since $\Pic0 (g)={g^\prime}^*\Pic0 S^\prime$, equality (\ref{V0}) can't hold. Since we know that (\ref{V0}) holds as soon as $\dim X-\dim S=i$ it follows that $\dim X-\dim S<i$, i.e. a component $W$ inducing such a fibration is non-standard. 
Moreover, since $\dim \Alb S^\prime=\dim S$, for such a component
\begin{equation}\label{added}\dim W<\dim X-i
\end{equation}
This explains the terminology \emph{cohomologically non-detectable irregular fibration}: such a fibration either it is not induced by any component of $V^i(X,\omega_X)$ for some $i$ (as for example  the  projections of a product of elliptic curves) or such a component is non-standard. 
\end{remark}

At the opposite end, $\chi$-positive fibrations are  the easiest to detect. The following proposition shows that equivalence classes of $\chi$-positive fibrations are derived invariants. 

\begin{proposition}\label{chi-positive} Let $X$ and $Y$ be varieties of maximal Albanese dimension with equivalent derived categories. Then there is a base-preserving bijection between the sets of  equivalence classes of $\chi$-positive irregular fibrations of $X$ and $Y$.
\end{proposition}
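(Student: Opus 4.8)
The plan is to use Orlov's theorem to transfer the question to the Iitaka images of $X$ and $Y$. Recall that by Orlov's theorem the canonical ring $R(X)=\bigoplus_{m\ge 0}H^0(X,\omega_X^m)$ is a derived invariant, so there is an isomorphism of graded rings $R(X)\cong R(Y)$; in particular $\kappa(X)=\kappa(Y)$. After replacing $X$ and $Y$ by suitable birational models we may assume, as in \S3, that the Iitaka fibrations $f_X:X\rightarrow Z_X$ and $f_Y:Y\rightarrow Z_Y$ are morphisms with $Z_X,Z_Y$ smooth. Since $Z_X$ (resp.\ $Z_Y$) is birational to $\mathrm{Proj}\,R(X)$ (resp.\ $\mathrm{Proj}\,R(Y)$), the isomorphism $R(X)\cong R(Y)$ produces a birational equivalence $Z_X\dashrightarrow Z_Y$. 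As the equivalence class of a fibration depends only on the birational class of its source, this already yields a base-preserving bijection between the equivalence classes of $\chi$-positive irregular fibrations of $Z_X$ and those of $Z_Y$.

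It therefore suffices to construct a base-preserving bijection between the equivalence classes of $\chi$-positive irregular fibrations of $X$ and those of $Z_X$ (and likewise for $Y$). The key point is that every fibration $g:X\rightarrow S$ whose base $S$ is of general type factors, birationally, through the Iitaka fibration $f_X$; this hypothesis is in particular satisfied by any $\chi$-positive irregular fibration. Indeed, by Viehweg's additivity theorem for fibre spaces over varieties of general type one has $\kappa(X)\ge\kappa(F)+\kappa(S)=\kappa(F)+\dim S$ for a general fibre $F$ of $g$, so $\kappa(F)\ge 0$; hence $\omega_{X/S}^m$ has a nonzero section for suitable $m$, and multiplying by it embeds $g^*H^0(S,\omega_S^m)$ into $H^0(X,\omega_X^m)$ for all large and divisible $m$. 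The rational map attached to this subspace equals, on the one hand, $\phi_{|mK_S|}\circ g$ — which coincides with $g$ up to birational modification, $S$ being of general type — and, on the other hand, a composition with $\phi_{|mK_X|}$, which for such $m$ coincides, up to birational equivalence, with $f_X$; hence $g$ factors through $f_X$, say $g=h\circ f_X$ with $h:Z_X\rightarrow S$. Since $f_X$ is surjective with connected fibres, $h$ has connected fibres, so $h$ is again a $\chi$-positive irregular fibration with the same base. Conversely, for any $\chi$-positive irregular fibration $h:Z_X\rightarrow S$ the composition $h\circ f_X:X\rightarrow S$ is a fibration (connected fibres, as a composition of such), again $\chi$-positive and irregular with the same base, and it factors through $f_X$ by construction. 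These two operations are mutually inverse on equivalence classes, and composing the resulting bijection with the one of the previous paragraph (and its analogue for $Y$) proves the Proposition.

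The step I expect to be the main obstacle is the factorization statement of the second paragraph — that a $\chi$-positive (more generally, a general-type-based) fibration of $X$ passes through the Iitaka fibration. The only non-formal ingredient there is Viehweg's additivity theorem, invoked to force $\kappa(F)\ge 0$: a more elementary argument through the Stein factorization of $g|_F$ stumbles on the same type of additivity question for the fibres of $f_X$. Everything else is bookkeeping: the connectedness of composite fibrations, the invariance of equivalence classes under birational modification, and the observation that the bijection is base-preserving simply because the base $S$ (hence any non-singular model $S'$, and in particular the sign of $\chi(\omega_{S'})$) is unchanged at every step.
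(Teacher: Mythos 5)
Your overall skeleton agrees with the paper's: both proofs reduce the statement to the Iitaka base by showing that every $\chi$-positive irregular fibration factors, up to equivalence, through the Iitaka fibration $f_X\colon X\to Z_X$, and then use Orlov's invariance of the canonical ring to identify $Z_X$ and $Z_Y$ birationally. Where you diverge is in how the factorization is obtained. The paper gets it from generic vanishing theory: by Lemma \ref{bijection} every $\chi$-positive fibration is induced by an abelian subvariety that is itself a (standard) component of some $V^i(X,\omega_X)$; by Proposition \ref{non-vanish}(a) that component lies in $V^0(X,\omega_X)$, hence inside $\widehat{a_{f_X}}(\mathrm{Pic}^0\, Z_X)$ by Theorem \ref{chps}(b), so the dual quotient of $\mathrm{Alb}\,X$ defining the fibration factors through $\mathrm{Alb}\,Z_X$. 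You instead invoke Viehweg's additivity over a base of general type.

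The factorization statement you want (a fibration over a general-type base factors birationally through the Iitaka fibration) is true, but your argument for it has a genuine gap. First, the inference ``$\kappa(X)\ge\kappa(F)+\dim S$, so $\kappa(F)\ge 0$'' is backwards: if $\kappa(F)=-\infty$ the inequality is vacuous. (This is repairable: $\kappa(F)\ge 0$ follows directly from $\kappa(X)\ge 0$ by restricting pluricanonical forms to a general fibre.) More seriously, the existence of a nonzero section of $\omega_{X/S}^m$ does not follow from $\kappa(F)\ge 0$: that hypothesis only gives that $g_*\omega_{X/S}^m$ has positive rank, and Viehweg's weak positivity of this sheaf does not produce global sections (a weakly positive sheaf on a general-type base can be section-free). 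Without such a section the multiplication map embedding $g^*H^0(S,\omega_S^m)$ into $H^0(X,\omega_X^m)$ is not defined, and the comparison of pluricanonical maps collapses. To salvage the Viehweg route one should argue differently, e.g.\ restrict $g$ to a general fibre $G$ of $f_X$, use that the general member of a covering family of subvarieties of a general-type variety is of general type, and apply Viehweg to the Stein factorization of $G\to g(G)$ to contradict $\kappa(G)=0$ unless $g(G)$ is a point; or simply use the paper's generic-vanishing argument, which delivers the factorization with no extra input.
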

\begin{proof} By Lemma \ref{bijection} all $\chi$-positive fibrations on a variety $X$ of maximal Albanese dimension are induced by abelian subvarieties which are (standard) components of $V^i(X,\omega_X)$ for some $i$. By Prop. \ref{non-vanish}(a) such components are contained in $V^0(X,\omega_X)$, hence in $\widehat{a_{f_X}}(\Pic0 Z_X)$ (Theorem \ref{chps}(b)). Therefore $\chi$-positive fibrations, as all fibrations induced by components of $V^i(X,\omega_X)$ for some $i$, factor, up to equivalence, through the Iitaka fibration $X\buildrel{f_X}\over\rightarrow Z_X$.  But, by Orlov's theorem, a derived equivalence $\varphi:\mathbf D(X)\rightarrow \mathbf D(Y)$ induces an isomorphism of the canonical rings. Hence the bases of the Iitaka fibrations $Z_X$ and $Z_Y$ are birational. As we are considering equivalence classes of fibration, we can assume that $Z_X=Z_Y:=Z$. Therefore the sets of equivalence classes of $\chi$-positive irregular fibrations of $X$ and $Y$ are both naturally bijective with the set of equivalence classes $\chi$-positive fibrations of $Z$. 
\end{proof}

\vskip0.3truecm\noindent\textbf{Proof of Theorem \ref{fibrations}. } Let us recall that $b(X)$ denotes the minimal base-dimension of
 \emph{the cohomologically detectable} irregular fibrations of $X$. 

\noindent\textbf{Step 1.} Assume that $b(X)>0$. \\
\emph{\emph{(a)} An irregular fibration $g$ of  base-dimension equal to $b(X)$ is cohomologically detectable if and only if it is weakly-$\chi$-positive. Moreover it is  $\chi$-positive if and only if its base is not birational to an abelian variety. }\\
 \emph{\emph{(b)} Conversely, every irreducible component $W$ of $V^{d-b(X)}(X,\omega_X)$  such that $\dim W\ge b(X)$ is standard. If this is the case the abelian subvariety parallel to $W$ is also a component of $V^{d-b(X)}(X,\omega_X)$ if and only if the corresponding fibration (via Lemma \ref{bijection}) is $\chi$-positive.}

\noindent The argument for Step 1 is  well known to the experts (see e.g. \cite{msri}, proof of Lemma 4.2). We start with the following

\noindent\emph{Claim. Let $g:X\rightarrow S$  be a cohomologically detectable irregular fibration such that $\dim X-\dim S=i$. Then, keeping the notation above,  for at least one $\alpha\in\Pic0 (g)$, the locus 
\begin{equation}\label{locus}V^0(S^\prime, R^ig^\prime_*(\omega_{X^\prime}\otimes P_\alpha))
\end{equation}
 is positive-dimensional. }
 \proof We first observe that if $\alpha$ belongs to a component of $\Pic0 (g)$ different from the neutral one then 
  the locus (\ref{locus}) is positive dimensional. In fact it must be non-empty thanks to Prop. \ref{non-vanish}(b) and if it was $0$-dimensional this would induce via Remark \ref{after-CJ}(3)   a ($0$-dimensional) component of the locus $V^{q(S^\prime)}(R^ig^\prime_*(\omega_{X^\prime}\otimes P_\alpha)$. This implies that $\dim S^\prime=q(S^\prime)$ and, via the ever-present Koll\'ar decomposition as in (\ref{union}), this would induce some elements different from $\{\hat 0\}$ in the locus $V^d(X,\omega_X)$, which is impossible. 
 
 Therefore we are left with the case when $\Pic0 (g)$ is connected and $V^0(S^\prime,\omega_S^\prime)$ is zero-dimensional (recall that $R^ig^\prime_*\omega_{X^\prime}=\omega_{S^\prime})$. But this, by a Theorem of Ein-Lazarsfeld (\cite{ch1} Th.1.8) is equivalent to the fact that $S^\prime$ is birational to an abelian variety, i.e. the fibration would be non-detectable.\endproof
 
 We now turn to  Step 1(a). Let $g:X\rightarrow S$ be a cohomologically detectable fibration with $\dim S=b(X)$. We claim that if it is not weakly-$\chi$-positive then there is another
cohomologically detectable fibration of lower base-dimension factoring (up to equivalence) through $g$, in contradiction with the definition of $b(X)$. 
Let $\alpha\in \Pic0 (g)$ as in the Claim. Then, again by Remark \ref{after-CJ}(3), the irreducible components of codimension $c$, with $0 < c<q(S^\prime)$, of (\ref{locus}) are also irreducible components of $V^c(S^\prime, R^ig^\prime_*(\omega_{X^\prime}\otimes P_\alpha))$, where $i= d -\dim S$. Via  the Koll\'ar decomposition  they induce positive dimensional components of the locus
$V^{i+c}(\omega_X)$. Via the linearity theorem and Remark \ref{bad},  such a component induces another cohomologically detectable irregular fibration of $X$, say $h$, with $d-\dim h(X)\ge i + c = d-\dim S+c$. Hence $\dim h(X) \le \dim S -c$, as asserted. 
This proves the direct implication of  the first equivalence of (a). The other implication is clear. Passing to the second equivalence, the direct implication is clear. Conversely, let us suppose that the base is non-birational to an abelian variety. Then, by the Theorem of Ein-Lazarsfeld as above, $V^0(S^\prime, \omega_{S^\prime})$ is positive-dimensional. If it was strictly contained in $\Pic0 S^\prime$ then, as above, its components would induce a cohomologically detectable fibration $h$ of smaller base-dimension, against the definition of $b(X)$.  This completes the proof of (a).

\noindent Passing to Step1(b), let $W$ be a positive-dimensional component of $V^{d-b(X)}(X,\omega_X)$ such that $\dim W\ge b(X)$. The statement to prove is that the induced fibration $g$ has base-dimension equal to $b(X)$. If the base-dimension was $< b(X)$ then, by definition of the integer $b(X)$,  the fibration $g$ would be cohomologically non-detectable. This means that  the base would be birational to an abelian variety of dimension $<b(X)$, and therefore, by $(\ref{compo})$, the component $W$ would have dimension $<b(X)$. The last assertion follows  from the second equivalence of (a) via Lemma \ref{bijection}. This concludes the proof  of Step 1.

\vskip0.3truecm\noindent\textbf{Step 2. }  \emph{Conclusion of the proof of Theorem \ref{fibrations}. }  We make the following

\noindent \emph{Claim.   $b(X)>0$ if and only if $\dim V^i(X,\omega_X)\ge d-i$ for some $0<i<d$.  If $b(X)>0$ then $d-b(X)$ is the maximal index $i$ with  $0<i<d$ such that $\dim V^i(X,\omega_X)\ge d-i$.}
\proof  Concerning the first equivalence, if $b(X)>0$ then by (a) of Step (1) there is a weakly-$\chi$-positive fibration $g$ of base dimension $b(X)$ and therefore, by Lemma \ref{bijection}, there is a component of $V^{d-b(X)}(X,\omega_X)$ 
 of dimension $\ge b(X)$. The other implication follows from Remark \ref{bad}. The last assertion follows by the same reasons.\endproof
 Now let $X$ and $Y$ be derived-equivalent varieties. By the Claim the integers $b(X)$  and b(Y) are respectively determined by the dimensions of the various
loci $V^i(X,\omega_X)$ and $V^i(Y,\omega_Y)$. Therefore Corollary \ref{cor2} yields that $b(X)=b(Y):=b$. 
 From Step 1 cohomologically detectable fibrations of base dimension equal to $b$ are weakly $\chi$-positive and their equivalence classes correspond to all components of dimension $\ge b$ of $V^{d-b}(X,\omega_X)$ and such components are standard. Therefore by Lemma \ref{bijection} they are in $1-1$ correspondence with the corresponding subset of abelian subvarieties of $\Pic0 X$. The same for $Y$. Therefore by Corollary \ref{cor2} the Rouquier isomorphism induces a bijection between the sets of equivalence classes of cohomologically detectable fibrations of base dimension $b$ on $X$ and $Y$.

  It remains to prove that there is a bijection preserving, up to equivalence, the bases of the fibrations.\footnote{Our notion of equivalence of fibrations is weaker than Lombardi's notion of \emph{isomorphism of irrational pencils} (\cite{lombardi-fibrations}). However, as in Lombardi's paper, it can be proved that the bijection of Theorem \ref{fibrations} is  base-preserving not only up to equivalence, but also up to isomorphism of the bases of the Stein factorizations of the maps $f$ of (\ref{diagram}).} To begin with, we note that the above-constructed bijection is base-preserving on the subset of fibrations whose bases are birational to abelian varieties. Indeed Step 1 shows that they correspond to components of dimension $\ge b$ of $V^{d-b}(X,\omega_X)$ such that their parallel abelian varieties, namely ${g^\prime}^*\Pic0 S^\prime$, are not components of $V^{d-b}(X,\omega_X)$. The same for $Y$. The Rouquier isomorphism sends  isomorphically such components of $V^{d-b}(X,\omega_X)$ to components of $V^{d-b}(Y,\omega_Y)$, say ${h^\prime}^*\Pic0 R^\prime$, with the same property. Both $S^\prime$ and $R^\prime$ are birational to abelian varieties, and their Picard tori are isomorphic. Therefore $S^\prime$ is birational to $R^\prime$.  Concenring the remaining fibrations, namely those whose bases are not birational to abelian varieties, by Step 1(a) they are $\chi$-positive. Therefore Proposition \ref{chi-positive} applies.\footnote{Here we are not claiming that this bijection coincides with the one constructed above, namely the one induced by the Rouquier isomorphism. However this is true, but the proof of this fact requires some tools not in use in this paper.}

\providecommand{\bysame}{\leavevmode\hbox
to3em{\hrulefill}\thinspace}

\end{document}